\documentclass[11pt,a4paper]{article}
\usepackage[T1]{fontenc}
\usepackage[utf8]{inputenc}
\usepackage{amsmath,amsthm}
\usepackage{newtxtext,newtxmath}
\usepackage[margin=27mm,headheight=15pt]{geometry}
\usepackage{microtype,mathtools,bm}
\usepackage{booktabs,array,enumitem}
\usepackage{xcolor}
\usepackage{fancyhdr,tocloft}
\usepackage[most]{tcolorbox}
\usepackage{hyperref}
\hypersetup{colorlinks=true,linkcolor=black,citecolor=black,urlcolor=black,
 pdftitle={Majorizing-measure bounds in the realized square-function metric},
 pdfauthor={Expository working draft based on force200},
 pdfsubject={Martingale comparison, random metrics, entropy, stochastic convolutions}}
\usepackage{bookmark}
\definecolor{ink}{RGB}{33,48,64}
\definecolor{pale}{RGB}{246,247,249}
\numberwithin{equation}{section}
\newtheorem{theorem}{Theorem}[section]
\newtheorem{lemma}[theorem]{Lemma}
\newtheorem{proposition}[theorem]{Proposition}
\newtheorem{corollary}[theorem]{Corollary}
\theoremstyle{definition}
\newtheorem{definition}[theorem]{Definition}
\newtheorem{example}[theorem]{Example}
\theoremstyle{remark}
\newtheorem{remark}[theorem]{Remark}
\newcommand{\E}{\mathbb E}
\newcommand{\Pp}{\mathbb P}
\newcommand{\R}{\mathbb R}

\newcommand{\F}{\mathcal F}
\newcommand{\G}{\Gamma}
\newcommand{\J}{\mathcal J}
\newcommand{\ind}{\mathbf 1}
\newcommand{\osc}{\operatorname{osc}}

\newcommand{\norm}[1]{\left\|#1\right\|}

\newtcolorbox{orientation}{colback=pale,colframe=ink!30,boxrule=.35pt,
 arc=0pt,left=9pt,right=9pt,top=7pt,bottom=7pt,breakable}
\newcommand{\roadmap}[1]{\begin{orientation}#1\end{orientation}}

\title{\textbf{Majorizing-measure bounds \\ in the realized square-function metric}
\\
\large From martingale comparison to random geometry} 
\author{\normalsize Expository working draft\\[.3em]
\small by Witold M. Bednorz and Rafa{\l} M. {\L}ochowski}
\date{}

\begin{document}
\maketitle
\thispagestyle{empty}
\begin{abstract}
The predictable square function measures the size of a martingale. Applied to
parameter differences, it also defines a random pseudometric. We explain how
to perform a majorizing-measure argument directly in this realized geometry,
without conditioning the terminal field to be Gaussian and without first
replacing the metric by a random multiple of a deterministic one. For a finite
family of predictable Gaussian sums in a $(2,D)$-smooth Banach space, and a fixed
probability measure $\mu$ on the parameter set, we prove
\[
 \norm{\max_j\osc_T f_j}_{L^p}
 \le CD\norm{\G_\mu(d)+\sqrt p\,\Delta_d}_{L^p},\qquad p\ge1,
\]
where $\Delta_d$ is the diameter of the parameter space in the terminal square-function metric and $\G_\mu(d)$ is its
ball-mass integral. The proof combines a localized exponential inequality,
pathwise averaging over random balls, and a stopping-time argument. Atomic
measures recover logarithmically weighted maximal inequalities; Haar measure
connects the result with homogeneous entropy and continuity estimates. We also
give extensions, examples, and a precise account of the restrictions on choosing
the averaging measure.
\end{abstract}

\tableofcontents
\medskip
\section*{Reading guide}
For the main argument, read Sections~\ref{sec:setting}--\ref{sec:goodlambda}
in order. Sections~\ref{sec:consequences} and~\ref{sec:haar} explain the choice
of measure through applications and examples. Section~\ref{sec:contractions}
returns to the original stochastic-convolution problem. The appendix records
the limitation on random measure
selection.

\medskip
\noindent\textbf{Notation at a glance}
\smallskip
\begin{center}
\small
\renewcommand{\arraystretch}{1.22}
\begin{tabular}{@{}p{.19\textwidth}p{.74\textwidth}@{}}
\toprule
Symbol & Meaning \\
\midrule
$j$; $t\in T$ & Filtration time; the separate parameter index. \\
$d_j(t,u)$ & Square function of the parameter difference up to step $j$. \\
$d=d_N$; $\Delta_d$ & Realized terminal pseudometric; its diameter. \\
$\mu$; $B_d(t,r)$ & Fixed probability measure; a possibly random metric ball. \\
$\G_\mu(d)$ & Integral of the square root of the ball-mass logarithm. \\
$D$; $C$ & Smoothness constant; a numerical constant that may change. \\
\bottomrule
\end{tabular}
\end{center}

%\newpage

\section{Introduction: from a square function to a geometry}

A useful way to study a martingale is to replace its fluctuations by an
accumulated conditional variance. If $(M_j)$ is a scalar martingale, its
predictable square function is
\[
 s_N(M)=\left(\sum_{i=1}^N
       \E\bigl[|M_i-M_{i-1}|^2\mid\F_{i-1}\bigr]\right)^{1/2}.
\]
For an indexed family $M_j(t)$, there is another use for the same construction:
\[
 d(t,u)=s_N\bigl(M(t)-M(u)\bigr).
\]
The quantity $d(t,u)$ measures how much noise distinguishes the two parameters.
It is a pseudometric, and it is usually random.

The main goal of this draft is to transfer questions about a
stochastic convolution to questions about such square functions, and then to
use the latter as moduli of continuity. The present manuscript carries out this
programme in a finite-index setting and explains its consequences. Its central
rule is simple:
\begin{center}
\emph{Keep the averaging measure fixed, but allow its metric balls to be random.}
\end{center}
This does not mean that the indexed process is Gaussian after its square
functions have been observed. In general that assertion is false. Instead, we
use an exponential estimate for a \emph{joint event}: an increment is large
while its square function is small.

\subsection{Historical background}

There are two classical strands behind the argument. The first relates the
sample behavior of a random field to the geometry of its increments. Dudley's
entropy bound for Gaussian processes \cite{Dudley67} estimates a supremum by an
integral of square roots of covering-number logarithms. Majorizing-measure and
generic-chaining theory refined this viewpoint; Talagrand's monograph
\cite{Talagrand14} gives a systematic account. For an upper bound, one can often
replace covering numbers by the masses of metric balls under a suitable
probability measure.

The second strand consists of distribution-function and square-function
inequalities for martingales. Burkholder's good-$\lambda$ method
\cite{Burkholder73} converts localized tail bounds into moment bounds. Pinelis
\cite{Pinelis94} developed sharp distributional inequalities in smooth Banach
spaces. The factor $D$ in this paper comes from that smoothness structure; the
factor $\sqrt p$ comes from a Gaussian-scale exponential estimate. For a
textbook treatment of martingale concentration, including the distinction
between conditional exponential estimates and moment bounds, see
Bercu--Delyon--Rio \cite[Chapter~3]{BDR15}.

The use of quadratic variation as a modulus is also established. Nishiyama's
\emph{Entropy Methods for Martingales} \cite[Section~2.4]{Nishiyama00} introduces
a quadratic modulus relative to a prescribed metric and proves localized
entropy bounds for continuous local martingales. Van der Vaart and van Zanten
\cite[Section~3, Lemma~3.1]{VdVvZ05} formulate a majorizing-measure version of
this approach. These results already implement the principle ``quadratic
modulus, then entropy.'' A distinction in the formulation below is that the
ball-mass functional is evaluated at the \emph{realized terminal metric},
rather than only at a fixed metric controlling it through one random scalar.

The averaging method has a direct predecessor in Bednorz's theorem on
majorizing measures \cite{Bednorz06}. Its proof uses normalized averages over
metric balls and a Sobolev-type inequality. In the present argument the balls
are random, so one must take care not to average an exponential estimate
against an auxiliary measure selected from the same terminal randomness. We
instead integrate the localized estimate against a fixed product measure and
only afterwards use random balls pathwise.

The connection with stochastic convolutions comes from
Kwapie\'n--Marcus--Rosi\'nski \cite{KMR06}. They prove, among other results, that
convolving a semimartingale with an independent continuous Gaussian kernel
having stationary increments and vanishing at zero gives a continuous process.
In their Section~4, translation invariance expresses entropy through the
Lebesgue measure of metric balls. A further step averages a random entropy
expression, using a result of Fernique. Here the same kind of homogeneous
ball-mass functional is applied to martingale square functions; the
probabilistic input does not require an independent Gaussian kernel.

There is particularly close sequential work. Block, Dagan and Rakhlin
\cite{BlockDaganRakhlin21} use a fixed measure and path-dependent balls for
predictable Rademacher trees. Their stated complexity takes a worst case over
noise paths, but intermediate estimates in Appendix~A also retain realized
ball masses. Thus neither random balls nor sequential majorizing measures
should be claimed as new here. The precise comparison concerns the
self-scaled $L^p$ estimate below, including its random diameter and its
Banach-space and time-maximal formulations.

Exponential estimates for convolutions were developed by Seidler
\cite{Seidler10}. Van Neerven and Veraar \cite{NV20,NV22} develop sharp maximal
inequalities for stochastic convolutions, including predictable contraction
recursions and evolution-family applications. Cox and van Winden
\cite{CoxVW24} prove a sharp bound for the supremum of countably many
convolutions, with weights $\sqrt{p+\log k}$, and derive modulus-of-continuity
estimates. Their argument uses exponential tails, a union bound and
good-$\lambda$; it also acknowledges earlier extrapolation work of Geiss
\cite{Geiss97}. We recover the form of that maximal bound by assigning mass of
order $k^{-2}$ to the $k$th index. 

\subsection{The contribution formulated in this draft}

The main statement is a bound of the form
\begin{equation}\label{eq:intro}
 \norm{\max_j\osc_T f_j}_{L^p}
 \le CD\norm{\G_\mu(d_\omega)+\sqrt p\,\Delta_{d_\omega}}_{L^p}.
\end{equation}
Both the shape and the diameter of the metric on the right may depend on the
same innovations that generate $f$. The measure $\mu$ is chosen before those
innovations, or conditionally on initial information. No optimization over an
arbitrary terminal random measure is implicit in \eqref{eq:intro}.

The main proof is organized so that its two operations are easy to distinguish.
First, a localized martingale inequality is integrated over all parameter pairs
with respect to $\mu\otimes\mu$. Second, a deterministic averaging argument is
performed on each realization of the resulting random metric. A stopping-time
argument then removes a deterministic diameter cutoff. Sections~\ref{sec:consequences}
and~\ref{sec:haar} explain why different choices of $\mu$ lead to the familiar
weighted and homogeneous estimates.

The authors acknowledge that they used AI tools to improve the preliminary version of this manuscript.

\section{Setting, notation and the main theorem}\label{sec:setting}

\subsection{Two indices with different roles}

The integer $j$ is the filtration-time index. The element $t\in T$ is the
parameter in which a supremum or a continuity property is studied. They need
not represent the same physical variable.

Let $T$ be a finite nonempty set. Let $X$ be a real separable Banach space with
\begin{equation}\label{eq:smooth}
 \norm{x+y}^2+\norm{x-y}^2
 \le 2\norm{x}^2+2D^2\norm{y}^2,\qquad x,y\in X,
\end{equation}
where $D\ge1$. This is the $(2,D)$-smoothness convention of \cite{NV22}.
Hilbert spaces have $D=1$. The spaces $L^q$, $2\le q<\infty$, have
$D=\sqrt{q-1}$; see \cite[Section~2]{NV22}.

Let $(\F_j)_{j=0}^N$ be a filtration. For every $i$, let $\gamma_i$ be a standard
Gaussian vector in $\R^{m_i}$, independent of $\F_{i-1}$ and measurable with
respect to $\F_i$. Let $A_i(t):\R^{m_i}\to X$ be $\F_{i-1}$-measurable,
with finite operator norm almost surely. Define
\begin{equation}\label{eq:model}
 f_0(t)=0,\qquad f_j(t)=\sum_{i=1}^j A_i(t)\gamma_i.
\end{equation}
For an operator $A:\R^m\to X$, write
\[
 \norm{A}_\gamma
   =\left(\E\norm{Ag}^2\right)^{1/2},\qquad g\sim N(0,I_m).
\]
This is the finite-dimensional Gaussian-radonifying norm. In a Hilbert target,
it is the Hilbert--Schmidt norm.

\begin{definition}[The square-function metric]
For $j\le N$, put
\begin{align}
 d_j(t,u)^2
 &=\sum_{i=1}^j\norm{A_i(t)-A_i(u)}_\gamma^2\label{eq:metric}\\
 &=\sum_{i=1}^j\E\!\left[
    \norm{(A_i(t)-A_i(u))\gamma_i}^2\mid\F_{i-1}\right].\nonumber
\end{align}
We abbreviate $d=d_N$ and $\Delta_d=\max_{t,u\in T}d(t,u)$.
\end{definition}

For each realization, $d_j$ is a pseudometric: the triangle inequality is
Minkowski's inequality in the $\ell^2$ sum of the operator-normed spaces. It may
vanish at distinct parameters. This is appropriate: if $d(t,u)=0$, every
$A_i(t)-A_i(u)$ is zero, so the two martingale paths agree.

\begin{remark}[Which square function?]
The relevant quantity is $s_N(f(t)-f(u))$, not
$|s_N(f(t))-s_N(f(u))|$. For example, the variables $\gamma$ and $-\gamma$
have equal individual square functions, but their difference has square
function $2$. In a contraction recursion, the square function is formed from
the innovations; see Section~\ref{sec:contractions}.
\end{remark}

\subsection{A fixed measure and random balls}

Fix a deterministic probability measure $\mu$ on $T$ such that $\mu(t)>0$ for
every $t$. We use closed balls
\[
 B_d(t,r)=\{u\in T:d(t,u)\le r\}.
\]
The choice of closed versus open balls does not affect the integrals below.

\begin{definition}[The ball-mass functional]
For a pseudometric $d$ on $T$, set
\begin{equation}\label{eq:Gamma}
 \G_\mu(d)=\sup_{t\in T}\int_0^{\Delta_d}
       \sqrt{\log\frac{e}{\mu(B_d(t,r))}}\,dr.
\end{equation}
If $\Delta_d=0$, the integral is zero.
\end{definition}

The integrand is large when the ball has small mass. Its integral records the
cost of refining an average around $t$ from the whole parameter set down to
arbitrarily small neighborhoods. The factor $e$ has the convenient consequence
\begin{equation}\label{eq:Gamma-monotone}
 \Delta_d\le\G_\mu(d),\qquad
 d'\le d\ \Longrightarrow\ 
 \Delta_{d'}\le\Delta_d,\quad \G_\mu(d')\le\G_\mu(d).
\end{equation}
The second assertion follows because smaller distances give larger balls.

\begin{theorem}[Realized square-function comparison]\label{thm:main}
Under the preceding assumptions, there is a numerical constant $C$ such that,
for every $p\ge1$,
\begin{equation}\label{eq:main}
 \norm{\max_{0\le j\le N}\osc_T f_j}_{L^p}
 \le CD\norm{\G_\mu(d)+\sqrt p\,\Delta_d}_{L^p},
\end{equation}
where $\osc_T f_j=\max_{t,u\in T}\norm{f_j(t)-f_j(u)}$.
The constant is independent of $T$, $N$, the dimensions $m_i$, and $p$.
In particular,
\begin{equation}\label{eq:mainL1}
 \E\max_j\osc_T f_j\le CD\E\G_\mu(d).
\end{equation}
\end{theorem}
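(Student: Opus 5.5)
The proof will follow the three-stage structure announced in the introduction: a localized exponential inequality, pathwise averaging over random balls, and a good-$\lambda$/stopping-time step.

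\emph{Step 1: the localized estimate.} Fix $t,u\in T$. Put $M_j=f_j(t)-f_j(u)$; it is an $X$-valued martingale with Gaussian innovations and predictable square function $d_j(t,u)$. Using \eqref{eq:smooth}, I will show that for every $\lambda>0$ the process
\[
\cosh\bigl(\lambda\norm{M_j}\bigr)\exp\bigl(-cD^2\lambda^2 d_j(t,u)^2\bigr)
\]
is a supermartingale. The argument is Pinelis' method: $x\mapsto\norm{x}^2$ is $2D^2$-smooth, and conditional Gaussianity of each increment gives the one-step bound. Optional stopping then yields the joint-event inequality
\[
\Pp\Bigl(\max_j\norm{M_j}\ge x,\ d(t,u)\le\delta\Bigr)\le 2\exp\bigl(-x^2/(CD^2\delta^2)\bigr).
\]
I will actually use the equivalent Orlicz form, namely
\[
\E\Bigl[\max_j\psi\bigl(\norm{M_j}/(CD\,d(t,u))\bigr)\ind_{\{d(t,u)\le\delta\}}\Bigr]\le 1,\qquad \psi(x)=e^{x^2}-1.
\]
Obtaining it requires a peeling over dyadic values of $d(t,u)$ in $[2^{-k}\delta,2^{1-k}\delta]$, which costs only a constant. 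Integrating against the fixed measure $\mu\otimes\mu$ gives
\[
\E\,\Phi\le 1,\qquad \Phi=\sum_{t,u}\mu(t)\mu(u)\max_j\psi\bigl(\norm{f_j(t)-f_j(u)}/(CD\,d(t,u))\bigr),
\]
with the convention $0/0=0$, justified because $d(t,u)=0$ forces the paths to agree. The key point is that $\mu$ is deterministic, so no conditioning on the random metric is needed.

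\emph{Step 2: the pathwise averaging.} Fix $\omega$ and $j$. Apply the deterministic Bednorz/Garsia--Rodemich--Rumsey-type lemma to the function $f_j(\cdot)(\omega)$ on the pseudometric space $(T,d_\omega)$ with measure $\mu$. For each $t$, consider the nested random balls $B_k=B_d(t,r_k)$ with $r_k=2^{-k}\Delta_d$. Telescope the normalized averages $\mu(B_k)^{-1}\int_{B_k}f_j\,d\mu$; this is legitimate since $T$ is finite and the chain stabilizes at $t$. Estimate each difference by Jensen applied to $\psi$:
\[
\Bigl\|\textstyle\frac{1}{\mu(B_k)}\int_{B_k}f-\frac{1}{\mu(B_{k+1})}\int_{B_{k+1}}f\Bigr\|\le CD\,r_k\,\psi^{-1}\!\Bigl(\frac{\Phi}{\mu(B_{k+1})^2}\Bigr).
\]
Since $\psi^{-1}(ab)\lesssim\sqrt{\log(1+a)}+\sqrt{\log(1+b)}$, summing over $k$ gives the pathwise bound
\[
\max_j\osc_T f_j\le CD\bigl(\G_\mu(d)+\Delta_d\sqrt{\log(1+\Phi)}\bigr).
\]
The maximum over $j$ passes through because $\Phi$ already contains $\max_j$.

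\emph{Step 3: moments.} Since $\E\Phi\le1$, $\sqrt{\log(1+\Phi)}$ has sub-Gaussian tails: $\Pp(\sqrt{\log(1+\Phi)}>s)\le e^{-s^2}$. If $\Delta_d$ were deterministic, $\norm{\cdot}_{L^p}\lesssim\sqrt p$ would finish the proof. With $\Delta_d$ random, however, the product $\Delta_d\sqrt{\log(1+\Phi)}$ cannot be split by Hölder without losing the self-scaled form. This is the main obstacle. My first attempt will localize with a deterministic cutoff. Introduce the stopping time $\tau=\min\{j: d_{j+1}\text{-diameter}>\delta\}$; this is well defined because $d_{j+1}$ is $\F_j$-measurable, since the $A_i$ are predictable. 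Then run Steps 1--2 for the stopped process $f_{j\wedge\tau}$, whose metric is $d_{\tau}\le d$ with diameter at most $\delta$; by \eqref{eq:Gamma-monotone} its functional is at most $\G_\mu(d)$. This yields a good-$\lambda$ inequality:
\[
\Pp\bigl(\max_j\osc f_j>\beta\lambda,\ CD(\G_\mu(d)+\Delta_d)\le\varepsilon\lambda\bigr)\le e^{-c\beta^2/\varepsilon^2}\,\Pp\bigl(\max_j\osc f_j>\lambda\bigr).
\]
To make it work, I would restart the chain at the first time the oscillation exceeds $\lambda$, using the same Step 1--2 argument conditionally on $\F_\sigma$ for $\sigma$ that time. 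Burkholder's good-$\lambda$ lemma, with the choice $\varepsilon\sim1/\sqrt p$ so that the Gaussian factor beats $\beta^p$, then produces the $\sqrt p\,\Delta_d$ term and the bound \eqref{eq:main}. Finally, \eqref{eq:mainL1} follows from \eqref{eq:Gamma-monotone}, since $\Delta_d\le\G_\mu(d)$.
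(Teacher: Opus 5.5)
Step~1 and the chaining inequality of Step~2 are fine as deterministic statements. The gap is the ``equivalent Orlicz form'', and everything after it depends on it. The joint tail \eqref{eq:localtail} controls $\{U>a,\ d\le r\}$ only for a \emph{deterministic} $r$. It does not give $\E[\psi(U/(CDd))\ind_{\{d\le\delta\}}]\le1$ with the random $d=d(t,u)$ in the denominator. Your peeling yields one constant per shell $\{2^{-k}\delta<d\le2^{1-k}\delta\}$, and the number of shells is unbounded, so the sum diverges. The shells are disjoint, but \eqref{eq:localtail} says nothing about their probabilities, since it is not a conditional Gaussian law.

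The claim is in fact false uniformly in $N$. Take $T=\{t,u\}$, $X=\R$ (so $D=1$), $\mu$ uniform, $f(u)=0$, and $f_j(t)=\sum_{i\le j}\ind_{\{i\le\sigma\}}\gamma_i$, where $S_n=\gamma_1+\dots+\gamma_n$ and $\sigma=\min\{n\ge1:|S_n|\ge a\sqrt n\}\wedge N$. The coefficients are predictable and $d(t,u)^2=\sigma$. Moreover $U(t,u)\ge a\,d(t,u)$ on $\{\max_{n\le N}|S_n|/\sqrt n\ge a\}$, whose probability tends to $1$ by the law of the iterated logarithm. Multiplying the coefficients by $\delta/\sqrt N$ forces $d\le\delta$ without changing the ratio. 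Hence $\E\Phi\ge\frac12\psi(a/C)\Pp\{\max_{n\le N}|S_n|/\sqrt n\ge a\}$ is unbounded, and this persists for your diameter-stopped process in Step~3.

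The missing idea is to normalize by deterministic radii. For fixed $r$, $\mathcal Z_r=\iint\ind_{\{d(v,w)\le r\}}\exp(U(v,w)^2/(8D^2r^2))\,d\mu(v)\,d\mu(w)$ satisfies $\E\mathcal Z_r\le2$. On $\{\Delta_d\le R\}$ with deterministic $R$, one chains along $r_n=2^{-n}R$ rather than your random $2^{-k}\Delta_d$; pairs in $B_n(t)\times B_{n-1}(t)$ satisfy $d\le3r_n$. The scales are combined through $V_R=\sum_n2^{-n}\sqrt{\log(1+\mathcal Z_{3r_n})}$, and convexity gives $\E e^{V_R^2}\le3$. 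The result is $Y^*=\sup_{s,t}\norm{f_s(t)-\bar f_s}\le c_0D\G_\mu(d)+c_0DRV_R$ on $\{\Delta_d\le R\}$. This is why a deterministic cutoff followed by a stopping-time argument is unavoidable. The reason is not the H\"older obstruction you point to; it is that the random scale cannot enter the exponential estimate at all.

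Two further points in Step~3 also need repair.

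First, your good-$\lambda$ event $\{CD(\G_\mu(d)+\Delta_d)\le\varepsilon\lambda\}$ with $\varepsilon\sim p^{-1/2}$ only yields $CD\sqrt p\,\norm{\G_\mu(d)}_{L^p}$, which is weaker than \eqref{eq:main}. The event must be $\{D(\G_\mu(d)+\sqrt p\,\Delta_d)\le\lambda/(4c_0)\}$. Then $\G_\mu(d)$ enters the localized bound with no random factor and contributes at most $\lambda/4$. Only $c_0DRV_R$, with $R=\lambda/(4c_0D\sqrt p)$ and $V_R$ at level $3\sqrt p$, contributes $3\lambda/4$, and it carries the small probability $3e^{-9p}$, which beats $2^p$.

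Second, restarting ``at the first time the oscillation exceeds $\lambda$'' fails in discrete time. A single unbounded Gaussian increment can jump past $2\lambda$ at that very step, leaving nothing for the residual estimate to control. The paper removes this overshoot by filling step $i$ with $W_i(s)=s\gamma_i+\beta_i(s)$, where the $\beta_i$ are independent Brownian bridges. This keeps the integer-time values, the energy and the localized tail, and it makes the crossing exact. The residual metric then satisfies $d^\tau\le d$, so $\G_\mu(d^\tau)\le\G_\mu(d)$ and $\Delta_{d^\tau}\le\Delta_d$ by \eqref{eq:Gamma-monotone}.
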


The inequality is understood in the extended sense when the right-hand side is
infinite. It is an upper bound, not a two-sided characterization. The fixed
measure may be optimized outside the norm:
\[
 \inf_{\mu\ \mathrm{deterministic}}
       \norm{\G_\mu(d)+\sqrt p\,\Delta_d}_{L^p}.
\]
It may not be optimized independently for every terminal outcome without an
additional argument or a selection cost.

\roadmap{\textbf{Proof map.}
Section~\ref{sec:gaussian} establishes a localized tail estimate for each
parameter difference. Section~\ref{sec:averaging} converts all those estimates
into one bound involving the random ball masses. Section~\ref{sec:goodlambda}
uses first crossings to recover an $L^p$ estimate at the random diameter. The
averaging part does not require that the terminal field be Gaussian.}

\section{The probabilistic input: a localized exponential inequality}\label{sec:gaussian}

\subsection{Gaussian subdivision}

A useful consequence of smoothness is the following conditional inequality,
proved in \cite[Lemma~2.4]{NV22}: if $\eta$ is centered and $x$ is fixed, then
\begin{equation}\label{eq:basiccosh}
 \E\cosh(\lambda\norm{x+\eta})
 \le\left[1+D^2\E\big(e^{\lambda\norm{\eta}}-1-
                      \lambda\norm{\eta}\big)\right]
             \cosh(\lambda\norm{x}).
\end{equation}
The cited bounded-variable version extends to a finite-rank Gaussian variable
by symmetric truncation and dominated convergence.

\begin{lemma}[One Gaussian innovation]\label{lem:gaussian}
If $G$ is a centered finite-rank Gaussian vector in $X$, then for $x\in X$ and
$\lambda\ge0$,
\begin{equation}\label{eq:gaussian-step}
 \E\cosh(\lambda\norm{x+G})
 \le\cosh(\lambda\norm{x})
       \exp\left(\frac{D^2\lambda^2}{2}\E\norm{G}^2\right).
\end{equation}
\end{lemma}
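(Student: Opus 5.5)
The plan is to avoid applying \eqref{eq:basiccosh} to $G$ in one step. Doing so would give the factor $1+D^2\E(e^{\lambda\norm{G}}-1-\lambda\norm{G})$, and this factor involves all higher moments of $\norm{G}$, not only the second. Instead I will use the stability of Gaussian laws under subdivision. Fix $n\ge1$ and let $G_1,\dots,G_n$ be independent copies of $G$. Then $S_n=n^{-1/2}\sum_{k=1}^n G_k$ has the same law as $G$, so $\E\cosh(\lambda\norm{x+G})=\E\cosh(\lambda\norm{x+S_n})$. Each summand $\eta_k=G_k/\sqrt n$ is small, so the cubic and higher terms in the factor become negligible as $n\to\infty$.

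\emph{Step 1 (iteration).} Put $S^{(k)}=n^{-1/2}\sum_{i\le k}G_i$. Condition on $G_1,\dots,G_{k-1}$ and apply \eqref{eq:basiccosh} with the fixed vector $x+S^{(k-1)}$ and the centered variable $\eta=G_k/\sqrt n$, which is independent of the conditioning. This gives
\[
\E\cosh\bigl(\lambda\norm{x+S^{(k)}}\bigr)\le \bigl[1+D^2\varepsilon_n\bigr]\,\E\cosh\bigl(\lambda\norm{x+S^{(k-1)}}\bigr),
\]
where $\varepsilon_n=\E\bigl(e^{\lambda\norm{G}/\sqrt n}-1-\lambda\norm{G}/\sqrt n\bigr)$. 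Iterating $n$ times and using $1+a\le e^a$ yields
\[
\E\cosh(\lambda\norm{x+G})\le\cosh(\lambda\norm{x})\exp(nD^2\varepsilon_n).
\]

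\emph{Step 2 (asymptotics of $n\varepsilon_n$).} For $y\ge0$ we have $e^y-1-y\le y^2/2+y^3e^y/6$. Hence
\[
n\varepsilon_n\le \frac{\lambda^2}{2}\E\norm{G}^2+\frac{\lambda^3}{6\sqrt n}\E\bigl(\norm{G}^3e^{\lambda\norm{G}}\bigr).
\]
Since $G$ is a finite-rank Gaussian vector, $\norm{G}\le\norm{A}\,|g|$ for some finite-dimensional standard Gaussian $g$ and some operator $A$ of finite norm. Therefore $\E\norm{G}^3e^{\lambda\norm{G}}<\infty$, and the second term vanishes as $n\to\infty$. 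Letting $n\to\infty$ in the bound of Step 1 gives \eqref{eq:gaussian-step}.

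The main point of care is Step 1. There I need the version of \eqref{eq:basiccosh} for unbounded, finite-rank Gaussian $\eta$, which the paper takes as established by truncation. I also need the conditional application to be legitimate, so I will check that all the expectations involved are finite. This follows from the same exponential-moment bound used in Step 2, together with $\cosh(\lambda\norm{x+S^{(k)}})\le\cosh(\lambda\norm{x})\,e^{\lambda\norm{S^{(k)}}}$. Once that is in place, the rest is the elementary limit computation.
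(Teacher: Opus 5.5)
Your proof is correct and follows essentially the same route as the paper. You subdivide $G$ into $n$ independent copies scaled by $n^{-1/2}$, apply \eqref{eq:basiccosh} successively to each summand, and let $n\to\infty$ so that only the second-order term survives; the only difference is that you control the remainder explicitly via $e^y-1-y\le y^2/2+y^3e^y/6$ and $1+a\le e^a$, where the paper uses Taylor's formula with dominated convergence to obtain $\frac{\lambda^2}{2m}\E\norm{G}^2+o(m^{-1})$.
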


\begin{proof}
Let $G_1,\ldots,G_m$ be independent copies of $G$. Gaussianity gives
$G\stackrel{d}=m^{-1/2}\sum_{\ell=1}^mG_\ell$. Apply
\eqref{eq:basiccosh} successively to the $m$ summands. The result is
\[
 \E\cosh(\lambda\norm{x+G})
 \le\cosh(\lambda\norm{x})
 \left[1+D^2\E\left(e^{\lambda\norm{G}/\sqrt m}-1-
                      \frac{\lambda\norm{G}}{\sqrt m}\right)\right]^m.
\]
A finite-rank Gaussian norm has exponential moments of every linear order.
Taylor's formula and dominated convergence therefore give
\[
 \E\left(e^{\lambda\norm{G}/\sqrt m}-1-
          \frac{\lambda\norm{G}}{\sqrt m}\right)
 =\frac{\lambda^2}{2m}\E\norm{G}^2+o(m^{-1}).
\]
Letting $m\to\infty$ proves \eqref{eq:gaussian-step}.
\end{proof}

The point of subdivision is that all terms above second order disappear in the
limit. One does not need to estimate the entire exponential series at once.
The same proof works conditionally when $x$ and the covariance of $G$ are
measurable with respect to the conditioning sigma-field.

\subsection{A supermartingale and its consequence}

Fix $t,u\in T$. Lemma~\ref{lem:gaussian}, applied conditionally, shows that
\begin{equation}\label{eq:compensated}
 Z_j^\lambda(t,u)=
 \cosh\bigl(\lambda\norm{f_j(t)-f_j(u)}\bigr)
 \exp\left(-\frac{D^2\lambda^2}{2}d_j(t,u)^2\right)
\end{equation}
is a nonnegative supermartingale starting at $1$. Localization removes any
initial integrability restrictions.

\begin{lemma}[Joint increment--energy tail]\label{lem:tail}
With $U(t,u)=\max_{j\le N}\norm{f_j(t)-f_j(u)}$, one has
\begin{equation}\label{eq:localtail}
 \Pp\{U(t,u)>a,\ d(t,u)\le r\}
 \le 2\exp\left(-\frac{a^2}{2D^2r^2}\right),
 \qquad a,r>0.
\end{equation}
\end{lemma}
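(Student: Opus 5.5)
We will apply optional stopping to the supermartingale $Z_j^\lambda(t,u)$ from \eqref{eq:compensated} at the first crossing time. Fix $t,u$, $a,r>0$, and $\lambda\ge0$ (to be chosen at the end). Define the stopping time
\[
 \tau=\min\{j\le N:\norm{f_j(t)-f_j(u)}>a\},
\]
with $\tau=N$ if no such $j$ exists. The process $d_j(t,u)$ is nondecreasing in $j$. On the event $\{U(t,u)>a,\ d(t,u)\le r\}$ we therefore have $\norm{f_\tau(t)-f_\tau(u)}>a$ and $d_\tau(t,u)\le d(t,u)\le r$. Consequently
\[
 Z_\tau^\lambda(t,u)\ge\cosh(\lambda a)\,e^{-D^2\lambda^2r^2/2}
 \ge\tfrac12 e^{\lambda a-D^2\lambda^2r^2/2}
\]
on that event. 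The point is that monotonicity of $d_j$ lets us control the energy at the crossing time, even though we only know the terminal energy.

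Since $Z^\lambda$ is a nonnegative supermartingale starting at $1$ and $\tau\le N$ is bounded, optional stopping gives $\E Z_\tau^\lambda\le 1$. To justify this, we use the localization mentioned after \eqref{eq:compensated}: stop additionally when $d_j$ or $\norm{f_j}$ exceeds a level $n$, note that $d_j$ is predictable, and pass $n\to\infty$ by Fatou's lemma. Markov's inequality then yields
\[
 \Pp\{U(t,u)>a,\ d(t,u)\le r\}\le 2\exp\Bigl(-\lambda a+\frac{D^2\lambda^2r^2}{2}\Bigr).
\]

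Optimizing with $\lambda=a/(D^2r^2)$ gives exactly $2\exp(-a^2/(2D^2r^2))$, which is \eqref{eq:localtail}. The only delicate step is the supermartingale property itself, which requires applying Lemma~\ref{lem:gaussian} conditionally on $\F_{i-1}$. Here $x=f_{i-1}(t)-f_{i-1}(u)$ and $G=(A_i(t)-A_i(u))\gamma_i$, whose conditional second moment is $\norm{A_i(t)-A_i(u)}_\gamma^2$. The factor $\exp(-D^2\lambda^2d_i^2/2)$ is $\F_{i-1}$-measurable and exactly cancels the growth factor. This step was already asserted in the text preceding the lemma, so the remaining work is the stopping argument above.
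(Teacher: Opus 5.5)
Your proposal is correct and follows essentially the paper's argument. You stop at the first crossing of $a$, bound $Z^\lambda$ from below there using $\cosh(\lambda a)\ge\frac12e^{\lambda a}$ and the monotonicity $d_\tau\le d\le r$, then apply the nonnegative-supermartingale maximal inequality (which you derive via optional stopping and Markov) and choose $\lambda=a/(D^2r^2)$. The extra localization you add is harmless but not needed, since $\E Z^\lambda_\tau\le1$ already holds for a nonnegative supermartingale with $Z_0^\lambda=1$ and a bounded stopping time.
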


\begin{proof}
At the first crossing of $a$, on the event that $d(t,u)\le r$, the
supermartingale in \eqref{eq:compensated} is at least
\[
 \frac12\exp\left(\lambda a-\frac{D^2\lambda^2r^2}{2}\right).
\]
The maximal inequality for a nonnegative supermartingale therefore bounds the
probability by $2\exp(-\lambda a+D^2\lambda^2r^2/2)$. Choose
$\lambda=a/(D^2r^2)$.
\end{proof}

\begin{remark}[A joint bound is not a conditional Gaussian law]
Equation~\eqref{eq:localtail} estimates the intersection of two events. It does
not say that $f_N(t)-f_N(u)$ is Gaussian conditional on $d$. Observing the
terminal coefficients may reveal information about earlier innovations. This
is why a direct application of Gaussian comparison after freezing all
coefficients is not used.
\end{remark}

\subsection{Filling the discrete increments continuously}\label{subsec:fill}

For the good-$\lambda$ argument, it is convenient to remove Gaussian
overshoots. On an enlarged probability space, take independent Brownian bridges
$\beta_i$ on $[0,1]$, independent of all the original variables, and set
\[
 W_i(s)=s\gamma_i+\beta_i(s),\qquad 0\le s\le1.
\]
Each $W_i$ is a standard $\R^{m_i}$-valued Brownian motion and is independent of
the information available before interval $i$. On the artificial interval
$[i-1,i]$, define
\begin{equation}\label{eq:interpolation}
 \widetilde f_{i-1+s}(t)
  =f_{i-1}(t)+A_i(t)W_i(s),\qquad 0\le s\le1.
\end{equation}
Within interval $i$, reveal only the path of $W_i$ up to the current time,
not the bridge $\beta_i$ or the endpoint $\gamma_i$ separately. Reveal the
original $\F_i$ at the end of the interval. Together with completed past
segments, this defines a filtration for which the field is a continuous
martingale, with exactly the original values at integer times.

Its accumulated energy during interval $i$ is
\[
 d_{i-1}(t,u)^2+s\norm{A_i(t)-A_i(u)}_\gamma^2.
\]
Lemma~\ref{lem:gaussian} applies to each deterministic Brownian subinterval, so
\eqref{eq:compensated} has a continuous-time supermartingale version. Optional
sampling gives the conditional form of \eqref{eq:localtail} after a stopping
time $\tau$. The remaining terminal metric is
\begin{equation}\label{eq:remainingmetric}
 (d^\tau(t,u))^2
   =\sum_{i=1}^N |(i-1,i]\cap(\tau,N]|\,
                   \norm{A_i(t)-A_i(u)}_\gamma^2
   \le d(t,u)^2.
\end{equation}
Here $|\cdot|$ means interval length. The restarted field is
$\widetilde f_s-\widetilde f_\tau$, $s\ge\tau$.

This construction is only a proof device. We do not replace the terminal
indexed family by an independent Gaussian process. In the following two
sections we work with $\widetilde f$ and omit the tilde; its supremum dominates
the original discrete supremum.

\section{Averaging over terminal random balls}\label{sec:averaging}

\subsection{Integrate first, then choose the balls}

For a deterministic radius $r>0$, define
\begin{equation}\label{eq:Zr}
 \mathcal Z_r=\iint
     \ind_{\{d(v,w)\le r\}}
     \exp\left(\frac{U(v,w)^2}{8D^2r^2}\right)
     \,d\mu(v)d\mu(w),
\end{equation}
where now $U(v,w)=\sup_{s\le N}\norm{f_s(v)-f_s(w)}$.

\begin{lemma}[A bounded exponential average]\label{lem:Z}
For every deterministic $r>0$,
\begin{equation}\label{eq:EZ}
 \E\mathcal Z_r\le2.
\end{equation}
The same bound holds conditionally for a field restarted at a stopping time,
with its remaining metric.
\end{lemma}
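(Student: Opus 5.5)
By Fubini (Tonelli, since the integrand is nonnegative), it suffices to show that for each fixed pair $(v,w)$,
\[
 \E\Bigl[\ind_{\{d(v,w)\le r\}}\exp\Bigl(\frac{U(v,w)^2}{8D^2r^2}\Bigr)\Bigr]\le 2,
\]
and then integrate against the probability measure $\mu\otimes\mu$. The point of integrating first is that $r$ and $\mu$ are deterministic, so the exponential estimate is applied pair by pair before any random ball is chosen.

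For fixed $(v,w)$ I would use the layer-cake formula. Write $Y=\exp(U^2/(8D^2r^2))\ge1$ and $E=\{d(v,w)\le r\}$. Then
\[
 \E[\ind_E Y]=\Pp(E)+\int_1^\infty \Pp(E,\ Y>y)\,dy .
\]
The event $\{Y>y\}$ is $\{U>a\}$ with $a^2=8D^2r^2\log y$. Lemma~\ref{lem:tail}, applied to the continuous interpolated field of Section~\ref{subsec:fill}, bounds $\Pp(E,\ U>a)$ by
\[
 2\exp\Bigl(-\frac{a^2}{2D^2r^2}\Bigr)=2y^{-4}.
\]
That lemma was proved by first crossing of a nonnegative supermartingale, and its proof is unchanged for the continuous version, since the energy at time $s\le N$ is at most $d(v,w)^2$. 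This gives
\[
 \E[\ind_E Y]\le \Pp(E)+\int_1^\infty \min\{\Pp(E),2y^{-4}\}\,dy\le 1+\tfrac23 ,
\]
which is at most $2$. A slicker route is to bound the total directly, $\int_1^\infty 2y^{-4}\,dy=2/3$, giving $\E[\ind_E Y]\le 1+2/3$. The constant $8$ leaves room to spare: any constant $c$ with $\exp(-a^2/(2D^2r^2))$ integrable against $dy$, that is $c>2$, would do.

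For the restarted statement, let $\tau$ be a stopping time of the continuous filtration. Use the continuous-time supermartingale version of \eqref{eq:compensated} for the field $f_s-f_\tau$, $s\ge\tau$, with remaining energy $(d^\tau)^2$ as in \eqref{eq:remainingmetric}. Optional sampling at $\tau$ makes the restarted process a nonnegative supermartingale started at $1$ conditionally on $\mathcal F_\tau$. Hence the conditional analogue of \eqref{eq:localtail} holds with $d^\tau$ in place of $d$, and $U$ replaced by $\sup_{s\ge\tau}\|(f_s-f_\tau)(v)-(f_s-f_\tau)(w)\|$. The layer-cake computation then repeats verbatim under $\E[\,\cdot\mid\mathcal F_\tau]$, and conditional Fubini is allowed because $\mu$ is deterministic, or $\mathcal F_0$-measurable.

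The main obstacle I expect is measurability and legitimacy in the conditional case. One needs the joint measurability of $(\omega,v,w)\mapsto$ the integrand, which is trivial since $T$ is finite. One also needs the supermartingale property to survive optional sampling on the enlarged filtration, including the localization that removes integrability issues for $\cosh(\lambda\|\cdot\|)$ when the coefficients $A_i$ are unbounded. I would handle the latter by stopping when $d_s$ exceeds a level $n$, or when the norms of the coefficients exceed $n$, applying Lemma~\ref{lem:tail} to the stopped process, and letting $n\to\infty$ by monotone convergence.
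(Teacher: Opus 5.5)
Your proposal is correct and is essentially the paper's proof: for each fixed pair you integrate the joint tail \eqref{eq:localtail}, in its continuous-interpolation form, to get $1+\int_1^\infty 2y^{-4}\,dy=5/3\le 2$, then integrate against the fixed product measure $\mu\otimes\mu$; the conditional case comes from the same localized tail for the field restarted at $\tau$ with remaining metric $d^\tau$. One small correction to your aside: replacing $8$ by $c$ gives the bound $1+4/(c-2)$, so $c>2$ only ensures finiteness, and the stated constant $2$ requires $c\ge 6$.
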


\begin{proof}
For a fixed pair $(v,w)$, integration of \eqref{eq:localtail} gives
\begin{align*}
 &\E\left[\ind_{\{d(v,w)\le r\}}
       \exp\left(\frac{U(v,w)^2}{8D^2r^2}\right)\right] \le 1+\int_1^\infty 2y^{-4}\,dy
   =\frac53\le2.
\end{align*}
Integrate against the fixed probability measure $\mu\otimes\mu$. The
conditional assertion follows from the conditional localized tail after
$\tau$.
\end{proof}

No independence between the indicator and $U$ is required. The indicator is
precisely the energy restriction already present in the joint tail estimate.
What matters is that the measure of integration is not selected afterwards
from the terminal noise.

\subsection{The pathwise averaging calculation}

Fix a deterministic $R>0$. We work on $\{\Delta_d\le R\}$ and put
\[
 r_n=2^{-n}R,\quad B_n(t)=B_d(t,r_n),\quad
 m_n(t)=\mu(B_n(t)),\qquad n\ge0.
\]
For a realized field, let
\begin{equation}\label{eq:Pn}
 P_n f_s(t)=\frac1{m_n(t)}\int_{B_n(t)}f_s(v)\,d\mu(v).
\end{equation}
The balls are random, and we make no assertion that these averages are
martingales. They are used only in a deterministic inequality on each outcome.

Since $B_0(t)=T$,
\[
 P_0f_s(t)=\bar f_s:=\int_T f_s(v)\,d\mu(v).
\]
As $n\to\infty$, the ball becomes the zero-distance class of $t$. The field is
constant on that class, so $P_nf_s(t)\to f_s(t)$. The finite-index assumption
makes this convergence immediate.

For $v\in B_n(t)$ and $w\in B_{n-1}(t)$, the triangle inequality gives
$d(v,w)\le3r_n$. Expressing $P_nf_s(t)-P_{n-1}f_s(t)$ as the average of
$f_s(v)-f_s(w)$ and using Jensen's inequality yields
\begin{equation}\label{eq:Jensenballs}
 \exp\left(
  \frac{\norm{P_nf_s(t)-P_{n-1}f_s(t)}^2}{8D^2(3r_n)^2}
 \right)
 \le\frac{\mathcal Z_{3r_n}}{m_n(t)m_{n-1}(t)}.
\end{equation}
The product set on the left is only a subset of all pairs with distance at
most $3r_n$; nonnegativity allows its integral to be bounded by
$\mathcal Z_{3r_n}$. This observation replaces the usual selection of a net.

Taking logarithms, enlarging $\log\mathcal Z$ to $\log(1+\mathcal Z)$, and
using $\sqrt{a+b}\le\sqrt a+\sqrt b$, we obtain
\begin{align}
 \norm{P_nf_s(t)-P_{n-1}f_s(t)}
 \le 6\sqrt2 D r_n\bigg[&
       \sqrt{\log(1+\mathcal Z_{3r_n})}\nonumber\\[-.3em]
       &+\sqrt{\log\frac1{m_n(t)m_{n-1}(t)}}\bigg].\label{eq:incrementaverage}
\end{align}
This holds for all centers and all times simultaneously. The cost of a random
ball is its mass under the fixed measure.

\subsection{Summing the geometry and the remaining randomness}

The masses satisfy $m_{n-1}(t)\ge m_n(t)$. Hence the second term in
\eqref{eq:incrementaverage} is bounded by
$\sqrt{2\log(1/m_n(t))}$. The dyadic integral comparison
\begin{equation}\label{eq:dyadicintegral}
 r_n\sqrt{\log\frac1{\mu(B_d(t,r_n))}}
 \le2\int_{r_n/2}^{r_n}
       \sqrt{\log\frac1{\mu(B_d(t,r))}}\,dr
\end{equation}
shows that its sum is controlled by $\G_\mu(d)$. The intervals of integration
are disjoint; above $\Delta_d$ the logarithm without $e$ is zero.

For the first term, define
\begin{equation}\label{eq:VR}
 V_R=\sum_{n\ge1}2^{-n}
             \sqrt{\log(1+\mathcal Z_{3R2^{-n}})}.
\end{equation}
Since the weights sum to one and $x\mapsto e^{x^2}$ is convex for $x\ge0$,
Lemma~\ref{lem:Z} implies
\begin{equation}\label{eq:VRexp}
 \E e^{V_R^2}
 \le\sum_{n\ge1}2^{-n}\E(1+\mathcal Z_{3R2^{-n}})\le3.
\end{equation}
The infinite sum follows by monotone convergence from finite weighted sums
with the missing weight assigned to zero. No independence across scales is
used.

\begin{proposition}[Localized random-geometry bound]\label{prop:localized}
Let $Y^*=\sup_{s\le N}\sup_t\norm{f_s(t)-\bar f_s}$. One may take a numerical
constant $c_0=32$ such that
\begin{equation}\label{eq:pathwise-local}
 Y^*\le c_0D\G_\mu(d)+c_0DR V_R
 \quad\text{on }\{\Delta_d\le R\}.
\end{equation}
Consequently, for $R,u>0$,
\begin{equation}\label{eq:localgeometry}
 \Pp\{Y^*>c_0D\G_\mu(d)+c_0DRu,\ \Delta_d\le R\}
 \le3e^{-u^2}.
\end{equation}
The estimate also holds conditionally for restarted fields.
\end{proposition}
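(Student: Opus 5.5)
The plan is to telescope along the dyadic averages, sum the two terms of \eqref{eq:incrementaverage} using \eqref{eq:dyadicintegral} and the definition of $V_R$, and then read off the tail bound from \eqref{eq:VRexp} via Markov's inequality.

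\textbf{Pathwise bound.} Fix an outcome in $\{\Delta_d\le R\}$, a time $s\le N$ and a center $t\in T$. Since $T$ is finite, $P_nf_s(t)=f_s(t)$ as soon as $r_n$ is below the smallest positive distance. Also $P_0f_s(t)=\bar f_s$, because $B_0(t)=T$ when $\Delta_d\le R$. Hence
\[
 f_s(t)-\bar f_s=\sum_{n\ge1}\bigl(P_nf_s(t)-P_{n-1}f_s(t)\bigr),
\]
a finite sum. Apply the triangle inequality and \eqref{eq:incrementaverage} to each term. The first contribution is
\[
 6\sqrt2 D\sum_{n\ge1}r_n\sqrt{\log(1+\mathcal Z_{3r_n})}=6\sqrt2 DR\,V_R,
\]
by the definition \eqref{eq:VR}. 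For the second contribution, use $m_{n-1}(t)\ge m_n(t)$ to bound it by $6\sqrt2 D\sqrt2\sum_{n\ge1}r_n\sqrt{\log(1/m_n(t))}$. By \eqref{eq:dyadicintegral}, this sum is at most $2\int_0^{R/2}\sqrt{\log(1/\mu(B_d(t,r)))}\,dr$, since the intervals $(r_n/2,r_n]$ are disjoint. Two facts finish this step. The integrand vanishes for $r\ge\Delta_d$, because the ball is then all of $T$. Below $\Delta_d$, $\log(1/\mu)\le\log(e/\mu)$. So the integral is at most $\G_\mu(d)$, and the second contribution is at most $24D\,\G_\mu(d)$. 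Both coefficients, $6\sqrt2$ and $24$, are below $32$. The bound does not depend on $s$ or $t$, because the random variables $\mathcal Z_r$ are defined through $U$, which already takes the supremum over time. Taking suprema over $s$ and $t$ gives \eqref{eq:pathwise-local} with $c_0=32$.

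\textbf{Tail bound.} On the event in \eqref{eq:localgeometry}, the pathwise bound forces $c_0DRV_R\ge Y^*-c_0D\G_\mu(d)>c_0DRu$, hence $V_R>u$. Markov's inequality together with \eqref{eq:VRexp} gives
\[
 \Pp\{V_R>u\}\le e^{-u^2}\E e^{V_R^2}\le 3e^{-u^2}.
\]
Crucially, $V_R$ is built from deterministic radii $3R2^{-n}$, so Lemma~\ref{lem:Z} applies to it without any selection issue. The random geometry enters only through $\G_\mu(d)$, which stays on the left inside the event.

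\textbf{Conditional version and main obstacle.} For restarted fields, rerun the same argument with $d^\tau$ in place of $d$ and with the conditional form of Lemma~\ref{lem:Z}. The pathwise part is purely deterministic, and the restarted increments are $\widetilde f_s-\widetilde f_\tau$. The main care point is this bookkeeping: the averages $P_n$ and the ball masses must be formed with respect to the same metric that appears in $\mathcal Z_r$ (here $d^\tau$), and $R$ must be a deterministic bound on that remaining diameter. The step needing the most checking is that the convergence $P_nf_s(t)\to f_s(t)$ and the vanishing of the integrand beyond $\Delta_d$ are correctly reconciled with the restriction to $\{\Delta_d\le R\}$. Apart from that, the argument is a direct summation.
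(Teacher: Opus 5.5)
Your proposal is correct and follows essentially the same route as the paper. The paper's proof also telescopes the ball averages from $P_0f_s(t)=\bar f_s$ to $f_s(t)$. It bounds the two parts of \eqref{eq:incrementaverage} by $6\sqrt2DRV_R$ and, via $m_{n-1}\ge m_n$ and \eqref{eq:dyadicintegral}, by $24D\G_\mu(d)$, then obtains \eqref{eq:localgeometry} from $\{V_R>u\}$, Markov's inequality and \eqref{eq:VRexp}. Your extra bookkeeping (the finite telescoping sum, the integrand vanishing beyond $\Delta_d$, and forming the restarted balls with $d^\tau$) spells out what the paper compresses into ``every step has a conditional counterpart.''
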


\begin{proof}
Telescope $P_nf_s(t)-P_{n-1}f_s(t)$, use
\eqref{eq:incrementaverage} and \eqref{eq:dyadicintegral}, and take the
supremum over $s,t$. The geometric contribution is at most $24D\G_\mu(d)$,
and the other contribution is at most $6\sqrt2DR V_R$. Both are bounded with
$c_0=32$. Equation~\eqref{eq:VRexp} and Markov's inequality prove the tail
bound. Every step has a conditional counterpart.
\end{proof}

\roadmap{\textbf{What has been gained?}
The metric in \eqref{eq:localgeometry} is the terminal random metric, not a
prescribed comparison metric. We have not conditioned on it. Random balls were
chosen only after the exponential increment estimate had been integrated
against a fixed product measure.}

\section{From a deterministic cutoff to the random diameter}\label{sec:goodlambda}

A tempting but invalid shortcut would be to put $R=\Delta_d$ directly in
\eqref{eq:localgeometry}. The estimate was proved for each deterministic $R$;
a terminal random choice requires additional work. The following stopping-time
argument supplies it.

\begin{proof}[Proof of Theorem~\ref{thm:main}]
First suppose the predictable coefficients are bounded. For the continuous
interpolation, let
\[
 Y_s=\sup_t\norm{f_s(t)-\bar f_s},\qquad
 H_p=D\bigl(\G_\mu(d)+\sqrt p\,\Delta_d\bigr).
\]
The process $Y_s$ is continuous, and $Y^*=\sup_sY_s$ has finite moments.
Fix $\lambda>0$ and let $\tau$ be the first crossing of level $\lambda$,
with $\tau=\infty$ if there is none. On $\{Y^*>\lambda\}$, continuity gives
$Y_\tau=\lambda$.

If $Y^*>2\lambda$, the centered restarted field
\[
 (f_s-f_\tau)-\int_T(f_s(v)-f_\tau(v))\,d\mu(v),\qquad s\ge\tau,
\]
has supremum norm greater than $\lambda$. Its metric $d^\tau$ satisfies
\eqref{eq:remainingmetric}, and therefore
\[
 \G_\mu(d^\tau)\le\G_\mu(d),\qquad
 \Delta_{d^\tau}\le\Delta_d.
\]
On $\{H_p\le\delta\lambda\}$, this gives
\[
 \G_\mu(d^\tau)\le\frac{\delta\lambda}{D},\qquad
 \Delta_{d^\tau}\le R:=\frac{\delta\lambda}{D\sqrt p}.
\]
Choose $\delta=(4c_0)^{-1}$ and apply the conditional version of
\eqref{eq:localgeometry} to the restarted field with $u=3\sqrt p$.
The two terms in its threshold are at most $\lambda/4$ and $3\lambda/4$.
Thus
\begin{equation}\label{eq:goodlambda}
 \Pp\{Y^*>2\lambda,\ H_p\le\delta\lambda\}
 \le \varepsilon_p\Pp\{Y^*>\lambda\},\qquad
 \varepsilon_p=3e^{-9p}.
\end{equation}
In deriving this inequality, the event involving the full future geometry is
bounded by the larger residual-geometry event before conditioning on
$\F_\tau$. It need not itself be $\F_\tau$-measurable.

It follows that
\[
 \Pp\{Y^*>2\lambda\}
 \le\Pp\{H_p>\delta\lambda\}
       +\varepsilon_p\Pp\{Y^*>\lambda\}.
\]
Integrate against $p\lambda^{p-1}\,d\lambda$. Since
$2^p\varepsilon_p<1/2$ for $p\ge1$,
\[
 (1-2^p\varepsilon_p)\E(Y^*)^p
 \le2^p\delta^{-p}\E H_p^p.
\]
Taking roots gives $\norm{Y^*}_{L^p}\le C\norm{H_p}_{L^p}$ with a numerical
$C$. Since $\osc_T f_j\le2Y_j$, this proves \eqref{eq:main} for bounded
coefficients.

For general coefficients, kill the current and all subsequent innovations
at the first index $i$ for which $\max_t\norm{A_i(t)}_\gamma>L$. This is a
predictable operation. The truncated terminal metric $d^{(L)}$ satisfies
$d^{(L)}\le d$, so its right-hand side is bounded by the original one through
\eqref{eq:Gamma-monotone}. There are only finitely many coefficients, and
all are finite almost surely; consequently, for each outcome the truncated
field eventually equals the original field. Fatou's lemma proves the general
inequality. Equation~\eqref{eq:mainL1} follows by taking $p=1$ and using
$\Delta_d\le\G_\mu(d)$.
\end{proof}

\begin{remark}[Where continuity in filtration time was used]
Only the first-crossing step needed continuous interpolation. The ball
averaging was entirely pathwise. This distinction is useful for other noise
models: one needs either continuous crossings or a separate control of the
overshoot. Section~\ref{sec:extensions} gives both types of example.
\end{remark}

\section{Consequences of choosing the measure}\label{sec:consequences}

\subsection{Countable suprema and logarithmic weights}

Suppose $f_j^k$ is a finite collection of martingales of the form
\eqref{eq:model}, driven by a common finite-dimensional Gaussian innovation at
each step. Independent Gaussian noises can be included by taking their direct
sum. Put
\[
 a_k=s_N(f^k)=\left(\sum_i\norm{A_i^k}_\gamma^2\right)^{1/2}.
\]
Add an index $0$ with $f_j^0=0$, and choose on $\{0,1,\ldots,K\}$
\begin{equation}\label{eq:atomic}
 \mu(k)=\frac{1}{2k(k+1)},\quad 1\le k\le K,
 \qquad \mu(0)=1-\sum_{k=1}^K\mu(k)\ge\frac12.
\end{equation}
For the actual square-function metric,
\[
 d(k,0)=a_k,\qquad \Delta_d\le2\max_k a_k.
\]
A ball centered at $k$ has mass at least $\mu(k)$ at every radius, and mass at
least $1/2$ when $r\ge a_k$. Therefore,
\begin{align*}
 \int_0^{\Delta_d}\sqrt{\log\frac{e}{\mu(B_d(k,r))}}\,dr
 &\le a_k\sqrt{\log\frac{e}{\mu(k)}}+
                  \Delta_d\sqrt{\log(2e)}\\
 &\le C a_k\sqrt{\log(k+1)}+C\Delta_d.
\end{align*}
The ball at $0$ is even simpler, since it always has mass at least $1/2$.
We obtain
\[
 \G_\mu(d)+\sqrt p\,\Delta_d
 \le C\sup_{k\le K}\sqrt{p+\log(k+1)}\,a_k.
\]

\begin{corollary}[Weighted maximal inequality]\label{cor:weighted}
For $p\ge1$,
\begin{equation}\label{eq:weighted}
 \norm{\sup_{k\le K}\max_{j\le N}\norm{f_j^k}}_{L^p}
 \le CD\norm{\sup_{k\le K}\sqrt{p+\log(k+1)}\,s_N(f^k)}_{L^p}.
\end{equation}
The same assertion holds for a countable collection by passage through finite
subcollections.
\end{corollary}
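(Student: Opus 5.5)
The plan is to apply Theorem~\ref{thm:main} to the augmented parameter set $T=\{0,1,\ldots,K\}$ with the atomic measure \eqref{eq:atomic}. The field is $f_j(k)=f_j^k$, with $f_j(0)=0$, and the coefficients are $A_i(k)=A_i^k$, $A_i(0)=0$. These are $\F_{i-1}$-measurable operators acting on the common Gaussian innovation $\gamma_i$, so the model \eqref{eq:model} is in force. If the original martingales are driven by independent Gaussian noises, we first pass to the direct sum. Extending each $A_i^k$ by zero on the other components leaves $\norm{A_i^k}_\gamma$ unchanged, hence also $a_k$. The measure is deterministic and charges every point, so Theorem~\ref{thm:main} applies.

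The left-hand side reduces as follows. Since $f_j(0)=0$, we have
\[
 \norm{f_j^k}=\norm{f_j(k)-f_j(0)}\le\osc_T f_j ,
\]
so
\[
 \sup_{k\le K}\max_{j\le N}\norm{f_j^k}\le\max_j\osc_T f_j .
\]
Theorem~\ref{thm:main} therefore bounds the left side of \eqref{eq:weighted} by $CD\norm{\G_\mu(d)+\sqrt p\,\Delta_d}_{L^p}$.

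The right-hand side is then estimated pathwise. First, $d(k,0)=a_k$, and the triangle inequality gives $\Delta_d\le 2\max_k a_k$.

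For a center $k\ge1$, split the integral in \eqref{eq:Gamma} at $r=a_k$.
\begin{itemize}
\item On $[0,a_k]$, use $\mu(B_d(k,r))\ge\mu(k)=1/(2k(k+1))$. This gives a contribution of at most $a_k\sqrt{\log(2e\,k(k+1))}\le C a_k\sqrt{\log(k+1)}+Ca_k$.
\item On $[a_k,\Delta_d]$, the ball contains $0$, so its mass is at least $1/2$. This contributes at most $\sqrt{\log(2e)}\,\Delta_d$.
\end{itemize}
The center $0$ contributes at most $\sqrt{\log(2e)}\,\Delta_d$. Combining these,
\[
 \G_\mu(d)+\sqrt p\,\Delta_d\le C\max_k a_k\sqrt{\log(k+1)}+C(1+\sqrt p)\max_k a_k\le C'\sup_{k\le K}\sqrt{p+\log(k+1)}\,a_k ,
\]
using $p\ge1$. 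Taking $L^p$ norms yields \eqref{eq:weighted}. The constant is independent of $K$, and this uniformity is what the countable extension needs.

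For a countable collection, apply the finite statement to the indices $1,\ldots,K$. The right-hand side for these indices is dominated by the full supremum. The left-hand suprema increase to the full supremum as $K\to\infty$, so monotone convergence concludes. If $N$ is also allowed to be infinite, the same limiting argument in $N$ applies.

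The main thing to check is that the augmentation by the index $0$ and the direct-sum embedding do not change the square functions $a_k$. They do not, since $\norm{\cdot}_\gamma$ of a zero-extended operator coincides with the original one. The rest is the ball-mass bookkeeping above.
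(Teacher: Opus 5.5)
Your proposal is correct and follows the paper's own argument step for step. You augment by the zero index, use the atomic measure \eqref{eq:atomic}, bound the ball-mass integral by $\mu(B_d(k,r))\ge\mu(k)$ below $r=a_k$ and by $\ge1/2$ above it, apply Theorem~\ref{thm:main}, and pass to countable families through initial segments. Your remarks that the direct-sum embedding preserves $a_k$ and that the constant is uniform in $K$ make explicit what the paper leaves implicit.
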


The artificial zero index turns a bound for oscillations into a bound for
absolute sizes. The weight $\sqrt{\log k}$ comes from the mass assigned to the
$k$th process. This explains geometrically the index dependence in the bound of
Cox--van Winden \cite{CoxVW24}; the convolution version is discussed in
Section~\ref{sec:contractions}.

\subsection{A choice among several geometries, with an explicit cost}

\begin{corollary}[Mixtures of averaging measures]\label{cor:mixture}
Let $(\mu_\ell)_{\ell\ge1}$ be prescribed full-support probability measures
on $T$, and let $\pi_\ell>0$ with $\sum_\ell\pi_\ell=1$. Then
\begin{equation}\label{eq:mixture}
 \norm{\max_j\osc_T f_j}_{L^p}
 \le CD\left\|
 \inf_\ell\left\{\G_{\mu_\ell}(d)+
       \Delta_d\sqrt{p+\log(1/\pi_\ell)}\right\}
 \right\|_{L^p}.
\end{equation}
\end{corollary}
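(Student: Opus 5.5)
The plan is to rerun the proof of Theorem~\ref{thm:main}, but with the single fixed measure replaced by the whole prescribed family $(\mu_\ell)$, and to pay for the random choice of $\ell$ through a union bound weighted by $\pi_\ell$. The paper warns against choosing the measure after seeing the terminal noise; the weights $\pi_\ell$ are exactly the selection cost that makes such a choice legitimate.

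\textbf{Step 1: a simultaneous localized bound.} For each $\ell$, Proposition~\ref{prop:localized} applies with $\mu=\mu_\ell$. This gives
\[
 \Pp\{Y^*_\ell>c_0D\G_{\mu_\ell}(d)+c_0DRu,\ \Delta_d\le R\}\le 3e^{-u^2},
\]
where $Y^*_\ell=\sup_{s,t}\norm{f_s(t)-\bar f^{(\ell)}_s}$. Since $\osc\le 2Y^*_\ell$ for every $\ell$, it is cleaner to work with the oscillation $O^*=\sup_s\osc_Tf_s$, which does not depend on $\ell$. Take $u=u_\ell=\sqrt{u_0^2+\log(1/\pi_\ell)}$ and sum over $\ell$. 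This yields
\[
 \Pp\Bigl\{O^*>2c_0D\inf_\ell\bigl[\G_{\mu_\ell}(d)+R\,u_\ell\bigr],\ \Delta_d\le R\Bigr\}\le3e^{-u_0^2}\sum_\ell\pi_\ell=3e^{-u_0^2}.
\]
The summation works because the event inside the braces is contained in the union over $\ell$ of the individual events. The same bound holds conditionally for restarted fields, by the conditional form of Lemma~\ref{lem:Z}.

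\textbf{Step 2: good-$\lambda$ at the random diameter.} Next I would repeat the proof of Theorem~\ref{thm:main} with
\[
 H_p=D\inf_\ell\bigl\{\G_{\mu_\ell}(d)+\Delta_d\sqrt{p+\log(1/\pi_\ell)}\bigr\}.
\]
Let $\tau$ be the first crossing of level $\lambda$ by the continuous process $O_s=\osc_T f_s$. On $\{O^*>3\lambda\}$ (or $2\lambda$, adjusting constants), the restarted field $f_s-f_\tau$ has oscillation exceeding $\lambda$, because oscillation is subadditive. On $\{H_p\le\delta\lambda\}$ there is a random index $\ell$ with
\[
 \G_{\mu_\ell}(d^\tau)\le\delta\lambda/D,\qquad \Delta_{d^\tau}\sqrt{p+\log(1/\pi_\ell)}\le\delta\lambda/D.
\]
The first inequality uses the monotonicity \eqref{eq:Gamma-monotone}.

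\textbf{Main obstacle.} The difficulty is that the diameter bound now depends on $\ell$, so there is no single deterministic $R$. To handle this, I would first partition over the random $\ell$ before conditioning, bounding by a sum over $\ell$. For each fixed $\ell$, set $R_\ell=\delta\lambda/(D\sqrt{p+\log(1/\pi_\ell)})$ and apply Proposition~\ref{prop:localized} for $\mu_\ell$ conditionally on $\F_\tau$, with $u=3\sqrt{p+\log(1/\pi_\ell)}$. The threshold is then at most $\lambda$ once $\delta=(4c_0)^{-1}$, and the conditional probability is at most
\[
 3e^{-9p}\pi_\ell^9\le 3e^{-9p}\pi_\ell .
\]
Summing over $\ell$ gives
\[
 \Pp\{O^*>2\lambda,\ H_p\le\delta\lambda\}\le 3e^{-9p}\,\Pp\{O^*>\lambda\}.
\]
As in the paper, the event involving the terminal geometry is enlarged to the residual-geometry event, which is legitimate because each $\mu_\ell$ is deterministic.

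\textbf{Step 3: conclusion.} The $L^p$ integration and the truncation of unbounded coefficients are then word for word as in the proof of Theorem~\ref{thm:main}. Note that $\sqrt{p}\,\Delta_d$ alone is dominated by $\Delta_d\sqrt{p+\log(1/\pi_\ell)}$, so no separate diameter term is needed.
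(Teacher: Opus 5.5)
Your argument is correct, but it takes a longer route than the paper.

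The paper never reopens the good-$\lambda$ argument. It observes that the mixture $\mu=\sum_\ell\pi_\ell\mu_\ell$ is itself a single deterministic full-support measure. Since $\mu(B)\ge\pi_\ell\mu_\ell(B)$ for every ball, one gets, pathwise and for every $\ell$, $\G_\mu(d)\le\G_{\mu_\ell}(d)+\Delta_d\sqrt{\log(1/\pi_\ell)}$. The infimum over $\ell$ can then be taken after seeing $d$. Theorem~\ref{thm:main} applied to $\mu$, together with $\sqrt p+\sqrt q\le\sqrt{2(p+q)}$, finishes the proof. There the selection cost is absorbed into the ball masses of the mixture, so no new probabilistic step is needed. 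The corollary also transfers automatically to every setting where the single-measure theorem holds.

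You instead pay the same cost $\log(1/\pi_\ell)$ as a weighted union bound in the tail estimate. You split over $\ell$ before conditioning on $\F_\tau$ and apply the conditional Proposition~\ref{prop:localized} to each $\mu_\ell$ separately, with $u=3\sqrt{p+\log(1/\pi_\ell)}$. This works. It is also the more flexible mechanism: it would still apply if the candidates could not be averaged into one measure on one metric, for instance different dominating metrics or different localized tail inputs.

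The price is that you must redo the stopping-time bookkeeping, and two small points need fixing there. First, you bound the oscillation by $2Y^*_\ell$, so the threshold in the conditional estimate must be at most $\lambda/2$; take $\delta=(8c_0)^{-1}$ rather than $(4c_0)^{-1}$. Second, the infimum over countably many $\ell$ need not be attained, so work on $\{H_p<\delta\lambda\}$, or allow $2\delta\lambda$ for the chosen $\ell$. Neither point affects the conclusion.
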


\begin{proof}
Use the fixed mixture $\mu=\sum_\ell\pi_\ell\mu_\ell$. For every ball,
$\mu(B)\ge\pi_\ell\mu_\ell(B)$. Hence
\[
 \G_\mu(d)\le\G_{\mu_\ell}(d)+
                       \Delta_d\sqrt{\log(1/\pi_\ell)}.
\]
This holds pathwise for every $\ell$, so the infimum may be taken after seeing
$d$. Combine it with Theorem~\ref{thm:main} and
$\sqrt p+\sqrt q\le\sqrt{2(p+q)}$.
\end{proof}

For $L$ measures with equal weights, the selection penalty is
$\Delta_d\sqrt{p+\log L}$. This is useful when several deterministic measures
are adapted to different likely shapes of the random geometry. The mixture
calculation is a corollary, not a separate majorizing-measure theorem.

There is also an initial-information version. A full-support measure may be
$\F_0$-measurable if the innovation assumptions hold conditionally on $\F_0$.
Apply the proof conditionally. This differs from choosing a measure after all
innovations have been observed.

\subsection{Recovery of a prescribed modulus}

Suppose $d(t,u)\le A(\omega)\rho(t,u)$ for a deterministic pseudometric $\rho$.
By monotonicity and scaling,
\[
 \G_\mu(d)\le A\G_\mu(\rho),\qquad
 \Delta_d\le A\Delta_\rho.
\]
Thus Theorem~\ref{thm:main} contains the familiar deterministic-geometry upper
bound. Its advantage is that such a domination is optional: it may be wasteful
when different outcomes have different well-populated small balls.

\begin{example}[A random two-block geometry]\label{ex:split}
Let $T=\{1,\ldots,m\}$ with even $m$, and let $A$ be a uniformly chosen subset
of size $m/2$, observed initially. Let $g$ be an independent standard Gaussian,
and put $f(t)=\ind_A(t)g$. The square-function metric is
$d(t,u)=|\ind_A(t)-\ind_A(u)|$. Under uniform $\mu$, every ball of radius less
than $1$ has mass $1/2$. Therefore
\[
 \Delta_d=1,\qquad \G_\mu(d)=\sqrt{\log(2e)},\qquad
 \osc_T f=|g|.
\]
The geometry remains inexpensive even though the partition itself is random.

For comparison, suppose $d\le K(\omega)\rho$ for one deterministic
pseudometric $\rho$. Every distinct pair can be separated by $A$, so
$a:=\min_{t\ne u}\rho(t,u)>0$ whenever $K$ is finite almost surely. A pair
attaining this minimum is separated with probability at least $1/2$;
therefore $\E K\ge1/(2a)$. For every fixed probability measure $\nu$, some
point has mass at most $1/m$, and its $\rho$-balls below radius $a$ are
singletons. Thus
\[
 \G_\nu(\rho)\ge a\sqrt{\log(em)},\qquad
 \E K\,\G_\nu(\rho)\ge\tfrac12\sqrt{\log(em)}.
\]
This separates the functionals. It is not by itself an example beyond all
classical methods, since conditioning on the initial partition solves it
immediately.
\end{example}

\section{Haar measure and homogeneous square functions}\label{sec:haar}

Let $T$ now be a finite abelian group, with normalized counting measure $\mu$.
Suppose the terminal metric is translation invariant:
\[
 d(t,u)=\sigma(t-u).
\]
Then $\sigma(0)=0$, $\sigma(-h)=\sigma(h)$, and
$\sigma(h+k)\le\sigma(h)+\sigma(k)$. All balls of the same radius have the same
Haar mass, so
\begin{equation}\label{eq:J}
 \G_\mu(d)=\J(\sigma):=
 \int_0^{\Delta_\sigma}
 \sqrt{\log\frac{e}{\mu\{h:\sigma(h)\le r\}}}\,dr,
 \qquad \Delta_\sigma=\max_h\sigma(h).
\end{equation}
Theorem~\ref{thm:main} immediately gives
\begin{equation}\label{eq:HaarLp}
 \norm{\max_j\osc_T f_j}_{L^p}
 \le CD\norm{\J(\sigma)+\sqrt p\,\Delta_\sigma}_{L^p}.
\end{equation}

This is the ball-mass geometry appearing in the homogeneous calculation of
\cite[Section~4]{KMR06}, with Haar measure in place of interval Lebesgue
measure. In that source, an independent Gaussian kernel provides a conditional
Gaussian metric. Here the preceding martingale argument provides the
probabilistic estimate directly.

\subsection{Averaging the homogeneous entropy}

\begin{proposition}[Finite-group entropy averaging]\label{prop:Haaraverage}
If $\sigma:T\to[0,\infty)$ is a random symmetric subadditive function, vanishing at
zero, and $\E\sigma(h)<\infty$ for all $h$, then
\begin{equation}\label{eq:Haaraverage}
 \E\J(\sigma)\le2\J(\bar\sigma),\qquad
 \bar\sigma(h)=\E\sigma(h).
\end{equation}
\end{proposition}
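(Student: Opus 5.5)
The plan is to rewrite $\J$ through the quantile function of $\sigma$ under Haar measure. After that rewriting, $\J$ depends linearly on quantities that can be averaged. Subadditivity then compares each random quantile with a deterministic average of $\bar\sigma$.

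\emph{Step 1 (quantile formula).} Put $F_\sigma(r)=\mu\{h:\sigma(h)\le r\}$ and $q_\sigma(x)=\inf\{r\ge0:F_\sigma(r)\ge x\}$ for $x\in(0,1]$, so that $q_\sigma(1)=\Delta_\sigma$. Let $\varphi(x)=\sqrt{\log(e/x)}$; it is decreasing with $\varphi(1)=1$. Writing $\varphi(F_\sigma(r))=1+\int_{F_\sigma(r)}^1|\varphi'(x)|\,dx$ and applying Fubini gives
\[
 \J(\sigma)=\Delta_\sigma+\int_0^1|\varphi'(x)|\,q_\sigma(x)\,dx,
\]
since $|\{r<\Delta_\sigma:F_\sigma(r)<x\}|=q_\sigma(x)$. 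The same identity holds for $\bar\sigma$. Note that $\bar\sigma$ is again symmetric, subadditive and vanishes at $0$, and its quantiles are deterministic.

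\emph{Step 2 (subadditive averaging).} For any $h$ and $k$ we have $\sigma(h)\le\sigma(h-k)+\sigma(k)$. Averaging over $k$ under Haar measure, and using translation invariance of $\mu$, gives $\Delta_\sigma\le2\int\sigma\,d\mu$. Hence
\[
 \E\Delta_\sigma\le2\int\bar\sigma\,d\mu=2\int_0^1q_{\bar\sigma}(x)\,dx .
\]
For the quantiles, fix $x$ and the deterministic sublevel set $S=\{\bar\sigma\le q_{\bar\sigma}(x)\}$, which has $\mu(S)\ge x$. Let $a=\mu(S)^{-1}\int_S\sigma\,d\mu$. Markov's inequality inside $S$ gives $\mu\{k\in S:\sigma(k)\le2a\}\ge\mu(S)/2$. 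Therefore $q_\sigma(x/2)\le2a$, and taking expectations,
\[
 \E q_\sigma(x/2)\le 2\mu(S)^{-1}\int_S\bar\sigma\,d\mu\le2q_{\bar\sigma}(x).
\]

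\emph{Step 3 (reassembly).} Take expectations in Step 1 and split the integral at $1/2$. On $(0,1/2]$ use $\E q_\sigma(y)\le2q_{\bar\sigma}(2y)$ and substitute $x=2y$. Since $|\varphi'(x/2)|=\bigl(x\sqrt{\log(2e/x)}\bigr)^{-1}\le2|\varphi'(x)|$, the factor $1/2$ from $dy=dx/2$ is offset by this doubling. This piece is therefore bounded by $2\int_0^1|\varphi'(x)|\,q_{\bar\sigma}(x)\,dx$. On $(1/2,1]$ bound $q_\sigma\le\Delta_\sigma$ and fold this piece into the $\E\Delta_\sigma$ term.

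The main obstacle is this last bookkeeping, which must land exactly on the constant $2$. The $\Delta$-terms need a finer treatment. I would first try two things: keep the precise ratio $\sqrt{\log(e/x)/\log(2e/x)}<1$ to gain slack, and bound $\Delta_\sigma$ not by the full mean but by $2a$ with $S$ chosen as a half-mass set. The latter gives $\Delta_\sigma\le 2q_\sigma(1/2)$-type estimates that reuse Step 2. If the sharp constant $2$ resists, the argument still yields $\E\J(\sigma)\le C\J(\bar\sigma)$, which is all that is needed when the result is combined with \eqref{eq:HaarLp}.
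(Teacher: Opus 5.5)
Your Step~1 is the continuous form of the paper's summation by parts: on a group of order $M$ one has $q_\sigma(x)=a_{\lceil Mx\rceil}$, and $\int_{(k-1)/M}^{k/M}|\varphi'|$ is exactly the paper's weight. Your bound $\E\Delta_\sigma\le2\int\bar\sigma\,d\mu\le2\Delta_{\bar\sigma}$ is also the paper's. The gap is in the term $\mathcal A(\sigma)=\int_0^1|\varphi'(x)|\,q_\sigma(x)\,dx$. Your Markov comparison $\E q_\sigma(y)\le2q_{\bar\sigma}(2y)$ is correct, and incidentally uses no subadditivity. What it yields, however, is
\[
 \E\J(\sigma)\le\varphi(\tfrac12)\,\E\Delta_\sigma+2\mathcal A(\bar\sigma)
 \le2\sqrt{1+\log2}\,\J(\bar\sigma),
\]
so the constant is about $2.60$, not $2$. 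Neither of your suggested repairs can close this, because the loss sits in the Markov step itself, not in how $\Delta$ is accounted for. Here is an example. Take $T=(\mathbb Z/2\mathbb Z)^4$ and $\sigma=\ind_{T\setminus\ker\chi}$, where $\chi$ is a uniformly chosen nontrivial character. Then $\Delta_\sigma=1$, $q_\sigma=0$ on $(0,\tfrac12]$, and $\bar\sigma=\tfrac8{15}$ off $0$. Keep $\E\Delta_\sigma$ exact and retain the exact ratio $|\varphi'(x/2)|$. Your intermediate bound then equals $\varphi(\tfrac12)+\tfrac{16}{15}\bigl(\varphi(\tfrac1{32})-\varphi(\tfrac12)\bigr)\approx2.17$, whereas $2\J(\bar\sigma)=\tfrac{16}{15}\varphi(\tfrac1{16})\approx2.07$. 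The true contribution of $(0,\tfrac12]$ here is $0$, but the Markov step charges about $0.87$ for it.

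The missing idea is concavity of $\mathcal A$. Since $\varphi$ is convex, $|\varphi'|$ is decreasing. Hence the weights $c_k=\varphi(\tfrac{k-1}M)-\varphi(\tfrac kM)$, $2\le k\le M$, which multiply the increasingly sorted values of $\sigma$, are decreasing in $k$. By the rearrangement inequality, $\mathcal A(\sigma)=\min_\pi\sum_{k=2}^Mc_k\,\sigma(\pi(k))$, where $\pi$ ranges over bijections from $\{2,\dots,M\}$ onto $T\setminus\{0\}$. This is a minimum of linear forms, hence concave, and Jensen gives $\E\mathcal A(\sigma)\le\mathcal A(\bar\sigma)$ with constant one. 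The whole factor $2$ is then spent only on the diameter, giving $\E\J(\sigma)\le2\Delta_{\bar\sigma}+\mathcal A(\bar\sigma)\le2\J(\bar\sigma)$. This is the paper's proof, and it makes the Markov step and the split at $\tfrac12$ unnecessary. As written, your argument proves the proposition only with constant $2\sqrt{1+\log2}$. That suffices for Corollary~\ref{cor:HaarL1}, but not for \eqref{eq:Haaraverage} as stated.
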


\begin{proof}
Write $M=|T|$ and order the values of $\sigma$ as
$0=a_1\le a_2\le\cdots\le a_M=\Delta_\sigma$, assigning the identity to the
first position. Let $\phi(v)=\sqrt{\log(e/v)}$ on $(0,1]$. Summation by parts
gives
\begin{equation}\label{eq:entropy-sorted}
 \J(\sigma)=\Delta_\sigma+\mathcal A(\sigma),\qquad
 \mathcal A(\sigma)=\sum_{k=2}^M
 \left[\phi\left(\frac{k-1}{M}\right)-
       \phi\left(\frac{k}{M}\right)\right]a_k.
\end{equation}
The function $\phi$ is decreasing and convex. The bracketed coefficients are
therefore nonnegative and decreasing in $k$. Pairing these weights with
increasingly sorted values is the minimum over all pairings. Thus
$\mathcal A$ is a minimum of linear functions of the values
$(\sigma(h))_{h\ne0}$, and hence is concave. Jensen's inequality gives
\[
 \E\mathcal A(\sigma)\le\mathcal A(\bar\sigma).
\]
Subadditivity supplies the remaining estimate. For every $h,k$,
$\sigma(h)\le\sigma(k)+\sigma(h-k)$. Averaging in $k$ and using Haar
invariance yields
\[
 \Delta_\sigma\le2\int_T\sigma\,d\mu,
 \qquad \E\Delta_\sigma\le2\Delta_{\bar\sigma}.
\]
Combine this with \eqref{eq:entropy-sorted}.
\end{proof}

\begin{corollary}\label{cor:HaarL1}
For a family in Theorem~\ref{thm:main} whose square-function metric is
translation invariant,
\begin{equation}\label{eq:HaarL1}
 \E\max_j\osc_T f_j\le CD\J(\E\sigma).
\end{equation}
\end{corollary}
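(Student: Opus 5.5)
The plan is to chain the $L^1$ case of Theorem~\ref{thm:main} with Proposition~\ref{prop:Haaraverage}. First I check that the realized square-function metric satisfies the hypotheses of that proposition. Translation invariance gives $d(t,u)=\sigma(t-u)$, and each realization of $d$ is a pseudometric. So, pathwise, $\sigma$ is symmetric, subadditive and vanishes at zero: $\sigma(h+k)=d(h+k,0)\le d(h+k,k)+d(k,0)=\sigma(h)+\sigma(k)$, using invariance for the first term. Since $\mu$ is normalized counting measure, it is a deterministic full-support probability measure, as Theorem~\ref{thm:main} requires. Moreover \eqref{eq:J} identifies $\G_\mu(d)=\J(\sigma)$ on every outcome.

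Second, I apply \eqref{eq:mainL1} to get
\[
\E\max_j\osc_T f_j\le CD\,\E\G_\mu(d)=CD\,\E\J(\sigma).
\]
If $\E\sigma(h)<\infty$ for all $h$, Proposition~\ref{prop:Haaraverage} gives $\E\J(\sigma)\le2\J(\bar\sigma)$ with $\bar\sigma=\E\sigma$, and the factor $2$ is absorbed into $C$. If instead $\E\sigma(h)=\infty$ for some $h$, then $\Delta_{\bar\sigma}=\infty$. Since $\J(\bar\sigma)\ge\Delta_{\bar\sigma}$, the right-hand side of \eqref{eq:HaarL1} is infinite and there is nothing to prove, in the extended sense fixed after Theorem~\ref{thm:main}.

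Two small points remain. First, $\bar\sigma$ inherits symmetry, subadditivity and vanishing at zero by linearity of expectation, so $\J(\bar\sigma)$ is well defined; this is used implicitly in the proposition. Second, the proposition needs $\sigma$ to be measurable as a random function. This holds because each $\sigma(h)=d(h,0)$ is a finite sum of measurable $\gamma$-norms of the coefficients.

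I expect no real obstacle, because the corollary is a composition of two stated results. The only care needed is the degenerate infinite-mean case and the verification that translation invariance of the terminal metric really yields the subadditive $\sigma$ required by Proposition~\ref{prop:Haaraverage}.
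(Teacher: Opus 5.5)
Your proposal is correct and follows the paper's route exactly: the corollary comes from applying the $L^1$ bound \eqref{eq:mainL1}, using $\G_\mu(d)=\J(\sigma)$ for normalized counting measure, and then Proposition~\ref{prop:Haaraverage}, with the factor $2$ absorbed into $C$. Your additional checks are details the paper leaves implicit: that $\sigma$ is symmetric and subadditive by the triangle inequality and invariance, that $\bar\sigma$ inherits these properties, and that the case $\E\sigma(h)=\infty$ is trivial in the extended sense.
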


The averaging principle should be viewed alongside the Fernique-based
averaging step in \cite[after (4.13)]{KMR06}. Proposition~\ref{prop:Haaraverage}
is an elementary proof for the finite-group formulation used here. It is not a
claim that arbitrary nonhomogeneous random metrics satisfy
$\E\G_\mu(d)\le C\G_\mu(\E d)$.

If $d$ is not invariant, one may set
\[
 \sigma(h)=\max_{t\in T}d(t+h,t).
\]
This is symmetric and subadditive and gives an invariant majorant of $d$.
The majorization is optional and can be expensive; the fixed-measure theorem
can instead be used on the original metric.

\subsection{An adaptive Fourier example}

\begin{example}[Predictable frequencies and amplitudes]\label{ex:Fourier}
On $T=\mathbb Z/m\mathbb Z$, let $b_i\in\R$ and $k_i\in\mathbb Z/m\mathbb Z$
be predictable. Let $(\gamma_i,\gamma_i')$ be independent standard Gaussian
pairs, independent of the preceding information, and set
\begin{equation}\label{eq:Fourier}
 f_j(t)=\sum_{i=1}^j b_i\left[
 \gamma_i\cos\frac{2\pi k_it}{m}+
 \gamma_i'\sin\frac{2\pi k_it}{m}\right].
\end{equation}
The coefficients and frequencies may depend on all previous Gaussian pairs.
The terminal field is therefore not generally Gaussian. Nevertheless, the
trigonometric identity for the sum of squared sine and cosine differences gives
\begin{equation}\label{eq:Fouriermetric}
 d(t,u)^2=\sigma(t-u)^2
 =2\sum_{i=1}^N b_i^2
       \left[1-\cos\frac{2\pi k_i(t-u)}m\right].
\end{equation}
The metric is exactly translation invariant for every outcome, so
Corollary~\ref{cor:HaarL1} applies without taking a spatial supremum.
If the second moments below are finite, the deterministic modulus
\[
 \tau(h)=\left(2\E\sum_{i=1}^N b_i^2
               \left[1-\cos\frac{2\pi k_ih}{m}\right]\right)^{1/2}
\]
satisfies $\E\sigma(h)\le\tau(h)$. Hence
$\E\max_j\osc_T f_j\le CD\J(\tau)$.
This gives a concrete setting in which the geometry evolves predictably but
remains homogeneous.
\end{example}

\section{Contractions and the stochastic-convolution target}\label{sec:contractions}

\subsection{A common predictable contraction}

Consider
\begin{equation}\label{eq:contraction}
 f_0(t)=0,\qquad
 f_j(t)=V_jf_{j-1}(t)+A_j(t)\gamma_j,
\end{equation}
where $V_j$ is a predictable linear contraction on $X$, the same for all
$t\in T$. Define $d$ from the innovations $A_i(t)\gamma_i$ exactly as in
\eqref{eq:metric}.

\begin{proposition}[Common-contraction version]\label{prop:contraction}
The conclusions of Theorem~\ref{thm:main} remain valid for
\eqref{eq:contraction} with this innovation metric.
\end{proposition}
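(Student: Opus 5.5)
The plan is to rerun the proof of Theorem~\ref{thm:main} and change only its probabilistic input, Lemma~\ref{lem:gaussian} and the supermartingale \eqref{eq:compensated}. The averaging of Section~\ref{sec:averaging} and the stopping-time argument of Section~\ref{sec:goodlambda} act only on the difference processes $f_j(t)-f_j(u)$ and on the metric $d$, so they carry over once the localized tail estimates are established. Because $V_j$ is linear and does not depend on $t$, the differences $g_j=f_j(t)-f_j(u)$ satisfy $g_j=V_jg_{j-1}+(A_j(t)-A_j(u))\gamma_j$. Since $\norm{V_j}\le1$,
\[
 \E\bigl[\cosh(\lambda\norm{g_j})\mid\F_{j-1}\bigr]
 \le\cosh(\lambda\norm{V_jg_{j-1}})\exp\Bigl(\tfrac{D^2\lambda^2}{2}\norm{A_j(t)-A_j(u)}_\gamma^2\Bigr)
 \le\cosh(\lambda\norm{g_{j-1}})\exp(\cdots).
\]
Here the first step is Lemma~\ref{lem:gaussian} applied conditionally with $x=V_jg_{j-1}$, which is $\F_{j-1}$-measurable. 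The second step uses that $\cosh$ is increasing on $[0,\infty)$. It follows that $Z^\lambda_j(t,u)$, defined with the innovation metric, is again a nonnegative supermartingale.

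The main obstacle is that $g_j$ is no longer a martingale, so neither the running maximum in Lemma~\ref{lem:tail} nor the continuous filling of Subsection~\ref{subsec:fill} transfers directly. I would first attempt the interpolation. On $[i-1,i]$ put $\widetilde f_{i-1+s}=V_if_{i-1}+A_iW_i(s)$. This path jumps at the integer time $i-1$ from $f_{i-1}$ to $V_if_{i-1}$, and the jump does not increase norms of differences or of centered quantities: since $V_i$ commutes with $\mu$-averaging, $\norm{V_i(f(t)-\bar f)}\le\norm{f(t)-\bar f}$. The crossing argument, however, needs the exact identity $Y_\tau=\lambda$. Because the jumps can only decrease $Y$, an upward first crossing of level $\lambda$ can still only happen during a continuous segment, and this keeps $Y_\tau=\lambda$ intact. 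One gap remains. After $\tau$ the restarted field must be a process that starts from zero and is driven by the contraction dynamics, and here linearity is essential. The decomposition I would use is $\widetilde f_s=\Phi_{s,\tau}\widetilde f_\tau+h_s$, where $\Phi_{s,\tau}$ is the predictable product of the $V_i$ and is a contraction common to all $t$, and $h$ is the contraction recursion restarted at $\tau$ from zero, whose metric is $d^\tau$. On $\{Y^*>2\lambda\}$ the first term has centered size at most $\lambda$, so $h$ has centered supremum greater than $\lambda$, and the good-$\lambda$ inequality \eqref{eq:goodlambda} follows exactly as before.

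It remains to verify that Lemma~\ref{lem:Z} and Proposition~\ref{prop:localized} hold for $h$, using only the tail bound \eqref{eq:localtail} for the contracted differences with $U$ the supremum over times. That tail bound follows from the supermartingale above together with the maximal inequality. The pathwise averaging needs nothing beyond linearity of $P_n$. Finally, the truncation argument and Fatou's lemma go through unchanged, because killing innovations keeps the $V_j$ and only decreases $d$.
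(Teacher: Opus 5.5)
Your proposal is correct and follows the paper's proof essentially step by step. You get the supermartingale from the conditional Gaussian step at $x=V_jg_{j-1}$ together with $\|V_jx\|\le\|x\|$. You apply $V_i$ at the start of each interpolation interval, so the centered field has no upward jumps and level crossings happen continuously. After the crossing you subtract the carried field $\Phi_{s,\tau}\widetilde f_\tau$ and run the localized estimate on the residual recursion started from zero with metric $d^\tau$. The paper does exactly this, and the good-$\lambda$, truncation and Fatou steps then go through unchanged, as you say.
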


\begin{proof}
For a pair $(t,u)$, the difference satisfies the same recursion with innovation
$(A_j(t)-A_j(u))\gamma_j$. Conditional Gaussian comparison and
$\norm{V_jx}\le\norm{x}$ show that \eqref{eq:compensated} is still a
supermartingale.

For interpolation, first apply $V_j$ at the beginning of interval $j$, and then
run its Brownian innovation. The centered field $f-\bar f$ is also contracted,
because a common linear $V_j$ commutes with the fixed $\mu$-average. Its norm
has no upward jump at a contraction. Positive level crossings therefore occur
continuously during an innovation segment.

After a crossing time, the old centered field is carried forward by products
of contractions, so its norm remains at most the crossing level. Subtract this
carried field. The residual starts at zero, satisfies the same subsequent
recursion, and has a remaining innovation metric bounded by $d$. The
conditional localized estimate and the averaging proof apply to it. The
proof of \eqref{eq:goodlambda} is consequently unchanged.
\end{proof}

This proposition does not identify $f$ as a martingale. Nor does it apply to
parameter-dependent $V_j(t)$ by the same difference argument: subtracting two
such recursions produces an additional term involving
$V_j(t)-V_j(u)$.

\subsection{The known convolution inequality as a corollary of the mechanism}

Let $S_k$ be deterministic contractive $C_0$-semigroups on $X$ and let
\[
 \Psi_k(t)=\int_0^tS_k(t-s)\psi_k(s)\,dW_k(s),\qquad
 a_k=\left(\int_0^{T_0}\norm{\psi_k(s)}_{\gamma(H,X)}^2\,ds\right)^{1/2}.
\]
All processes are on a common filtered space; the Brownian noises need not be
independent. We use their continuous versions. The one-process exponential
estimate of van Neerven--Veraar, stated as Theorem~3.3 in the supplied
Cox--van Winden manuscript, is an established input \cite{NV20,CoxVW24}.
Stopping the integrand at an energy level gives
\begin{equation}\label{eq:convtail}
 \Pp\{\Psi_k^*>x,\ a_k\le r\}
 \le3\exp\left(-\frac{x^2}{4D^2r^2}\right),
 \quad \Psi_k^*=\sup_{t\le T_0}\norm{\Psi_k(t)}.
\end{equation}

There is an averaging proof even when the semigroups differ. On
$\{0,1,\ldots,K\}$, add $\Psi_0=0$, $a_0=0$, and use the star pseudometric
\[
 d_*(k,\ell)=a_k+a_\ell\quad(k\ne\ell),\qquad d_*(k,k)=0.
\]
The triangle inequality and a union bound in \eqref{eq:convtail} imply
\begin{equation}\label{eq:startail}
 \Pp\{\sup_t\norm{\Psi_k(t)-\Psi_\ell(t)}>x,
                         \ d_*(k,\ell)\le r\}
 \le6\exp\left(-\frac{x^2}{16D^2r^2}\right).
\end{equation}
The localized averaging proof of Section~\ref{sec:averaging} applies with
changed numerical constants. With the atomic measure \eqref{eq:atomic},
\[
 \G_\mu(d_*)+\sqrt p\,\Delta_{d_*}
 \le C\max_{k\le K}\sqrt{p+\log(k+1)}\,a_k.
\]
At the first crossing of $\max_k\norm{\Psi_k(t)}$ through $\lambda$,
contractivity bounds each carried term
$S_k(t-\tau)\Psi_k(\tau)$ by $\lambda$. The restarted convolutions satisfy
\eqref{eq:convtail} conditionally, and their energies only decrease.
Good-$\lambda$ therefore gives
\begin{equation}\label{eq:CV}
 \left\|\sup_{k\ge1}\sup_{t\le T_0}\norm{\Psi_k(t)}\right\|_{L^p}
 \le CD\left\|\sup_{k\ge1}\sqrt{p+\log(k+1)}\,a_k\right\|_{L^p}.
\end{equation}
Finite families are handled first, and the countable result follows by
monotone convergence. This recovers the form of
\cite[Theorem~3.1]{CoxVW24}, not its optimized constant $10$.

\subsection{How the discrete proof connects to a convolution}

For an elementary predictable integrand, the splitting approximation is
\begin{equation}\label{eq:splitting}
 f_j^\pi=S(t_j-t_{j-1})
       \big(f_{j-1}^\pi+\psi(t_{j-1})(W_{t_j}-W_{t_{j-1}})\big).
\end{equation}
Its innovation energy is at most
\[
 (t_j-t_{j-1})\norm{\psi(t_{j-1})}_{\gamma(H,X)}^2.
\]
Thus the discrete Gaussian-contraction calculation provides the natural
energy bound before passage to continuous time. For bounded finite-rank
step integrands, the splitting approximation converges at every fixed finite
collection of times by strong continuity of $S$ and the usual $L^2$
stochastic-integral estimate. Localized tail bounds pass through these finite
time sets, then through a dense set using the known continuous version.
General integrands are treated by elementary approximation and localization;
\cite{NV22} develops the underlying approximation framework.

This paragraph explains the link with the original discrete plan. It does not
identify the exact one-interval convolution integral with a frozen coefficient:
\[
 \int_{t_{j-1}}^{t_j}S(t_j-s)\psi(s)\,dW(s)
 \quad\hbox{need not equal}\quad
 S(t_j-t_{j-1})\psi(t_{j-1})\Delta W_j.
\]
The latter is a splitting approximation. For arbitrary random evolution
families, adaptedness introduces additional issues, and the general It\^o
formula above is not asserted; see the forward-integral treatment in
\cite[Section~6]{NV22}.

\subsection{The metric for a parameter-dependent kernel}

For
\[
 F_j(t)=\sum_{i\le j}K(t,i)g_i\gamma_i,
\]
where the coefficient maps are predictable for each fixed $t$, the relevant
metric is
\begin{equation}\label{eq:kernelmetric}
 d(t,u)^2=\sum_i\norm{(K(t,i)-K(u,i))g_i}_\gamma^2.
\end{equation}
This is the geometry on which the ball-mass calculation should be performed.
One should not automatically move a spatial supremum inside the sum of
squares. For a deterministic semigroup kernel, the continuous-time energy of
an output increment, $u\le t$, is
\begin{align}
 d(t,u)^2={}&\int_0^u
 \norm{(S(t-r)-S(u-r))g_r}_\gamma^2\,dr\nonumber\\
 &+\int_u^t\norm{S(t-r)g_r}_\gamma^2\,dr.\label{eq:timekernel}
\end{align}
The first term changes the action on old noise; the second adds new noise.
Estimating the masses of balls defined by \eqref{eq:kernelmetric} or
\eqref{eq:timekernel} is a model-specific task, separate from the comparison
theorem.

\section{Moduli of continuity and approximation}\label{sec:continuity}

\subsection{A finite-grid form}

Let $T_L=\{k2^{-L}:0\le k\le2^L\}$ and $0<\alpha<1$. Define
\begin{equation}\label{eq:dyadicS}
 \mathcal S_{\alpha,p}
 =\max_{\substack{0\le r\le L\\1\le k\le2^r}}
 2^{\alpha r}\sqrt{p+r+1}\,
 d(k2^{-r},(k-1)2^{-r}).
\end{equation}
Apply Corollary~\ref{cor:weighted} to the martingales associated with adjacent
dyadic pairs, ordered from coarse to fine, with weights $2^{\alpha r}$.
A level-$r$ pair has an index whose logarithm is comparable to $r+1$.
The deterministic dyadic chaining inequality then gives
\begin{equation}\label{eq:Holder}
 \left\|\max_{j\le N}
  \sup_{t\ne u\in T_L}
       \frac{\norm{f_j(t)-f_j(u)}}{|t-u|^\alpha}\right\|_{L^p}
 \le C_\alpha D\norm{\mathcal S_{\alpha,p}}_{L^p}.
\end{equation}
The constant does not depend on grid depth. The deterministic equivalence
behind this step is also used in \cite[(1.7)]{CoxVW24}.

For example, if
\[
 d(t,u)\le A(\omega)
       \frac{|t-u|^\alpha}{\sqrt{p+\log(e/|t-u|)}},
\]
then the $L^p$ norm of the output H\"older seminorm is bounded by
$C_\alpha D\norm{A}_{L^p}$. If instead $d(t,u)\le A|t-u|^\alpha$, the same
argument gives the enlarged output modulus
\[
 |t-u|^\alpha\sqrt{p+\log(e/|t-u|)}.
\]
Indeed,
$\sum_{\ell\ge r}2^{-\alpha\ell}\sqrt{p+\ell+1}
\le C_\alpha2^{-\alpha r}\sqrt{p+r+1}$.
These are sufficient regularity bounds, not claims of exact moduli for every
family.

\subsection{Why the compact-parameter averaging argument is legitimate}

For a compact metric parameter space $T$, suppose each $t\mapsto A_i(t)$ is
continuous in the Gaussian-radonifying norm, almost surely, and $\mu$ is a
fixed full-support Borel probability measure. The square-function metric is
continuous. For the finite-rank Gaussian sums,
\begin{equation}\label{eq:pathwiseLipschitz}
 \max_j\norm{f_j(t)-f_j(u)}
 \le\left(\sum_i\norm{\gamma_i}_{\R^{m_i}}^2\right)^{1/2}d(t,u).
\end{equation}
We used $\norm{A}_{\mathrm{op}}\le\norm{A}_\gamma$ and Cauchy--Schwarz.
For the continuous interpolation, replace $\norm{\gamma_i}$ by
$\sup_{0\le s\le1}\norm{W_i(s)}$.

Thus averages over $d$-balls of radius $r$ approach the field uniformly as
$r\downarrow0$. Such balls have positive $\mu$-mass because $d$ is continuous
and $\mu$ has full support. Joint measurability follows from the continuous
parameter dependence. The proof of Sections~\ref{sec:averaging} and
\ref{sec:goodlambda} consequently works with integrals over $T$. If needed,
measurability of the ball-mass functional follows from its lower
semicontinuity as a function of $d\in C(T\times T)$. Indeed, under uniform
convergence of metrics, ball masses at a fixed center converge at every radius
except the countably many atoms of the corresponding distance distribution.
Fatou's lemma, followed by the supremum over centers, gives the claimed lower
semicontinuity. A dense countable parameter set determines the continuous
field supremum.

\begin{corollary}[A continuity criterion through approximations]\label{cor:continuity}
Let $F^{(n)}$ be continuous finite-rank fields on a compact metric space, such
that each difference has a representation covered by the preceding compact
version. Denote its square-function metric by $d^{n,m}$. If, for a fixed
full-support $\mu$ and some $p\ge1$,
\begin{equation}\label{eq:criterion}
 \norm{\G_\mu(d^{n,m})+\sqrt p\,\Delta_{d^{n,m}}}_{L^p}
       \longrightarrow0
\end{equation}
and $F^{(n)}(t_0)$ is Cauchy in $L^p$ at one parameter $t_0$, then
$F^{(n)}$ is Cauchy in $L^p(\Omega;C(T;X))$. Its limit has continuous paths.
\end{corollary}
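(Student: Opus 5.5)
The plan is to apply Theorem~\ref{thm:main}, in its compact-parameter form from the preceding subsection, to the difference fields $G^{n,m}=F^{(n)}-F^{(m)}$. By hypothesis each $G^{n,m}$ is a continuous finite-rank field covered by the compact version, with square-function metric $d^{n,m}$. The theorem then gives
\[
 \norm{\sup_{t,u\in T}\norm{G^{n,m}(t)-G^{n,m}(u)}}_{L^p}
 \le CD\norm{\G_\mu(d^{n,m})+\sqrt p\,\Delta_{d^{n,m}}}_{L^p}.
\]
Here the supremum is over the compact set, and it is reached through a dense countable subset together with the monotone limit over finite subsets. For each finite subset $T'\subset T$ I will use the fixed measure $\mu$ as the averaging measure. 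On a finite set, $\mu$ does not directly restrict to a full-support probability on $T'$. I therefore plan to use the compact version of the averaging proof directly, with integrals over $T$, as the preceding subsection asserts is legitimate. If instead one works with finite sets, the fix is to push $\mu$ forward to $T'$ by a nearest-point map. This costs only a small change in the metric, which vanishes as the mesh shrinks.

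Next I control the full sup norm, not just the oscillation. For every $t$ we have $\norm{G^{n,m}(t)}\le\norm{G^{n,m}(t_0)}+\osc_TG^{n,m}$. Taking the $L^p$ norm of the supremum over $t$ gives
\[
 \norm{\sup_t\norm{G^{n,m}(t)}}_{L^p}\le\norm{G^{n,m}(t_0)}_{L^p}+CD\norm{\G_\mu(d^{n,m})+\sqrt p\,\Delta_{d^{n,m}}}_{L^p}.
\]
Both terms tend to zero, the first by the Cauchy assumption at $t_0$ and the second by \eqref{eq:criterion}. So $(F^{(n)})$ is Cauchy in $L^p(\Omega;C(T;X))$. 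That space is complete, being a Bochner $L^p$ space over the Banach space $C(T;X)$ with $T$ compact, so a limit exists. Measurability of the sup norm is handled by the dense countable set noted earlier.

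Finally, the limit has continuous paths: a subsequence converges almost surely uniformly on $T$, and a uniform limit of continuous functions is continuous. I expect the main obstacle to be the first step, namely justifying that Theorem~\ref{thm:main} holds on a compact parameter space with a full-support, non-atomic $\mu$, where singleton balls have mass zero. This requires the uniform convergence of the ball averages from \eqref{eq:pathwiseLipschitz}, the positivity of ball masses, and the lower semicontinuity needed for measurability of $\G_\mu(d^{n,m})$. I will take these from the preceding subsection rather than reprove them, and check only that the stopping-time argument is unaffected, which holds because it uses only the continuous filtration-time interpolation.
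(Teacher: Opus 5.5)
Your argument is correct and is essentially the paper's proof: apply the compact-parameter comparison to $F^{(n)}-F^{(m)}$, bound $\sup_t\|F^{(n)}(t)-F^{(m)}(t)\|$ by its value at $t_0$ plus the oscillation, and invoke completeness of $L^p(\Omega;C(T;X))$. The nearest-point push-forward aside is unnecessary, and its claimed vanishing cost is not justified in general: $\Gamma_\mu$ is only lower semicontinuous under uniform perturbation of the metric, and below the displacement scale the pushed-forward ball masses are controlled only by Voronoi-cell masses, so rely solely on the compact version, as you ultimately do.
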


\begin{proof}
The comparison bounds the oscillation of $F^{(n)}-F^{(m)}$. Its supremum
norm is bounded by that oscillation plus its value at $t_0$. Completeness of
$L^p(\Omega;C(T;X))$ proves the assertion.
\end{proof}

Identification with a pre-existing pointwise stochastic integral requires
pointwise convergence of the approximations. The corollary does not silently
assume this identification. In a homogeneous approximation scheme, the finite
Haar argument suggests estimating $\J(\E\sigma^{n,m})$; passage to a compact
group requires the corresponding entropy-averaging limit to be justified, not
merely the existence of Haar measure.

\section{Extensions that the proof supports}\label{sec:extensions}

\subsection{Predictable Rademacher sums}

Consider
\[
 f_j(t)=\sum_{i\le j}a_i(t)\varepsilon_i,
\]
where each $\varepsilon_i$ is a symmetric sign independent of $\F_{i-1}$
and $a_i(t)\in X$ is predictable. Its square-function metric is
\[
 d(t,u)^2=\sum_i\norm{a_i(t)-a_i(u)}^2.
\]
The same fixed-measure estimate holds. Here are the two changes in the proof.

First, the smoothness calculation of \cite[(2.6)]{NV22} implies, for
$g(r)=\frac12[\cosh(\lambda\norm{x+ra})+
\cosh(\lambda\norm{x-ra})]$,
\[
 g''(r)\le D^2\lambda^2\norm{a}^2g(r),\qquad g'(0)=0.
\]
Comparison with the solution of the constant-coefficient differential equation
gives
\begin{align}
 \tfrac12[\cosh(\lambda\norm{x+a})+
           \cosh(\lambda\norm{x-a})]
 &\le\cosh(\lambda\norm{x})\cosh(D\lambda\norm a)\nonumber\\
 &\le\cosh(\lambda\norm{x})
       e^{D^2\lambda^2\norm a^2/2}.\label{eq:Rademacherstep}
\end{align}
This yields the compensated supermartingale and the localized tail.

Second, there is a direct overshoot bound. At a step $i$,
\[
 \sup_t\norm{\Delta(f_i(t)-\bar f_i)}
 \le\sup_t\int_T\norm{a_i(t)-a_i(u)}\,d\mu(u)
 \le\Delta_d.
\]
At a first discrete crossing of $\lambda$, the centered norm is at most
$\lambda+\Delta_d$. On the good-$\lambda$ event with
$D(\G_\mu(d)+\sqrt p\Delta_d)\le\delta\lambda$, this is at most
$(1+\delta)\lambda$. A subsequent crossing of $2\lambda$ requires a
residual exceeding $(1-\delta)\lambda$. The remaining metric decreases as
before, and the conditional localized bound gives absorption after reducing
$\delta$. This proves the analogue of \eqref{eq:main}.

The conclusion is an upper bound. It is not a two-sided characterization of
Bernoulli suprema. Its relation to \cite{BlockDaganRakhlin21} is especially
close and should be addressed before a novelty claim.

\subsection{Finite families of continuous local martingales}

Let $(M_s(t))_{s\le T_0}$, $t\in T$, be a finite family of real-valued
continuous local martingales, all starting at zero and adapted to one
filtration. Set
\begin{equation}\label{eq:bracketmetric}
 d(t,u)=\langle M(t)-M(u)\rangle_{T_0}^{1/2}.
\end{equation}
The matrix of terminal quadratic covariations is positive semidefinite:
quadratic variation of every rational linear combination is nonnegative,
and continuity in the coefficients gives the assertion for all combinations.
Consequently, \eqref{eq:bracketmetric} is a pseudometric.

For $L=M(t)-M(u)$, the exponential local martingale
$\exp(\lambda L_s-\lambda^2\langle L\rangle_s/2)$, with localization,
gives the joint bound
\[
 \Pp\{\sup_{s\le T_0}|L_s|>a,\ \langle L\rangle_{T_0}\le r^2\}
 \le2e^{-a^2/(2r^2)}.
\]
The same estimate holds conditionally after a stopping time, and the remaining
bracket metric only decreases. The averaging and continuous first-crossing
arguments therefore prove
\begin{equation}\label{eq:continuousmart}
 \norm{\sup_{s\le T_0}\osc_T M_s}_{L^p}
 \le C\norm{\G_\mu(d)+\sqrt p\,\Delta_d}_{L^p}.
\end{equation}
Localizing the finitely many quadratic variations and using Fatou's lemma
removes moment restrictions used during the proof. In particular, if
$M_s(t)=\int_0^s h_r(t)\,dL_r$ for one continuous local martingale $L$, then
\[
 d(t,u)^2=\int_0^{T_0}|h_r(t)-h_r(u)|^2\,d\langle L\rangle_r.
\]
The random clock remains part of the metric. This formulation should be read
alongside the earlier continuous-martingale entropy work
\cite{Nishiyama00,VdVvZ05}, not as a first use of brackets in entropy theory.

\subsection{What fails for unrestricted jumps}

A pure predictable-square-function bound cannot hold for arbitrary
martingale differences. Let
\[
 \Pp\{\xi=q^{-1/2}\}=\Pp\{\xi=-q^{-1/2}\}=q/2,
 \qquad \Pp\{\xi=0\}=1-q.
\]
Then $\E\xi=0$ and $\E\xi^2=1$, while
$\norm\xi_{L^p}=q^{1/p-1/2}\to\infty$ for fixed $p>2$ as $q\downarrow0$.
For a two-point parameter set this already prevents an upper bound involving
only the predictable variance metric. A second quantity controlling jumps is
necessary. The discrete martingale theorem in \cite[Theorem~3.6]{CoxVW24}
accordingly contains both a maximal-jump and a square-function term.

One possible research direction is a two-metric random-geometry estimate,
with a Gaussian entropy integral for the quadratic variation and an
exponential entropy integral for the jump scale. Such a theorem is not proved
here. Its joint-ball construction and stopping-time overshoots must be treated
explicitly.

\section{What is established, and what remains to compare}\label{sec:status}

The finite-index Gaussian comparison, its common-contraction version, the
atomic and mixture corollaries, the finite homogeneous entropy average, and
the Rademacher and scalar continuous-martingale versions have proofs in this
manuscript. The convolution bound uses a cited one-process exponential
estimate as its input. Compact-parameter conclusions are stated with
continuity, measurability and approximation assumptions rather than as an
unrestricted theorem for arbitrary indexed families.

The historical literature already contains deterministic majorizing-measure
bounds, quadratic martingale moduli, ball-averaging proofs, and path-dependent
sequential balls. Therefore the appropriate comparison is the exact
self-scaled estimate \eqref{eq:main}: where the metric is random, where the
norm is taken, which measure is fixed, and how the terminal diameter is
handled. In particular, one should check whether a localization of the
intermediate estimates in \cite[Appendix~A]{BlockDaganRakhlin21} already yields
some of the scalar versions stated here. No assertion of priority is made.

For applications, the next useful task is to estimate
$\mu(B_d(t,r))$ for a concrete parameter-dependent kernel. A successful example
should show why keeping the realized geometry improves an actual bound, rather
than merely rewriting a known scalar envelope estimate. The adaptive Fourier
field in Example~\ref{ex:Fourier} is a tractable test case; nonlinear or
nonhomogeneous kernels present a further problem.

\roadmap{\textbf{The conclusion of the argument.}
The square function is not only an intermediate variance estimate. Applied to
parameter differences, it is the metric on which the averaging argument is
performed. The fixed measure pays for selecting its random balls, and
stopping-time localization pays for using the random diameter. This is the
precise sense in which the original comparison plan leads to entropy and
continuity estimates.}

\appendix
\
\section{Why the averaging measure cannot be optimized for free}\label{app:adaptive}

We give a concrete obstruction. Let $\gamma_1,\ldots,\gamma_n$ be independent
standard Gaussians, let $b_i=\operatorname{sgn}\gamma_i$, and index the family by
$\theta\in\{-1,1\}^n$. Put
\[
 a_i^\theta=\theta_i\prod_{k<i}\ind_{\{\theta_k=b_k\}},\qquad
 X_j^\theta=\sum_{i\le j}a_i^\theta\gamma_i.
\]
The coefficients are predictable. For the realized branch $\theta=b$,
$X_n^b=\sum_i|\gamma_i|$. Every other branch agrees until its first mismatch,
takes one negative increment, and stops. Therefore
\begin{equation}\label{eq:tree-large}
 \sup_\theta|X_n^\theta|=\sum_{i=1}^n|\gamma_i|,
 \qquad \E\sup_\theta|X_n^\theta|=\sqrt{2/\pi}\,n.
\end{equation}
The oscillation is at least this large: a branch disagreeing at the first
coordinate has value $-|\gamma_1|$.

For fixed realized $b$, however, the coefficient vectors fall into only $n+1$
classes: first mismatch at $k=1,\ldots,n$, or no mismatch. For two mismatch
classes $k<\ell$, their squared Euclidean distance is $4+\ell-k$; the full
branch has squared distance $4+n-k$ from class $k$. Hence the terminal
square-function metric has diameter at most $\sqrt{n+3}$ and is comparable,
up to an additive constant inside the square, to distance on a line followed
by a square root.

Choose a terminal random full-support measure giving mass $1/(n+1)$ to each
class and distributing that mass uniformly within the class. For $r<3$, every
ball has mass at least $1/(n+1)$. For $3\le r\le\sqrt{n+3}$ it contains at
least $c\min(r^2,n+1)$ consecutive classes, with a numerical $c>0$. Thus
\[
 \G_{\mu_\omega}(d_\omega)
 \le C\sqrt{\log(n+1)}+
 C\int_0^{\sqrt n}\sqrt{\log\frac{en}{r^2}}\,dr
 \le C'\sqrt n.
\]
The last integral is evaluated by $r=\sqrt n\,x$ and the integrability of
$\sqrt{\log(e/x^2)}$ on $(0,1)$.

Together with \eqref{eq:tree-large}, this rules out a universal bound with
$\E\inf_\mu\G_\mu(d_\omega)$ in place of the fixed-measure right-hand side.
The counterexample does not contradict Theorem~\ref{thm:main}: the convenient
measure here was chosen from the complete realized branch. Corollary~\ref{cor:mixture}
permits a prescribed collection of measures only with its explicit selection
penalty.

\end{document}